\newtheoremstyle{plainsc}
{\topsep}
{\topsep}
{\itshape}
{}
{\large\scshape}
{}
{5pt plus 1pt minus 1pt}
{\thmname{#1}\thmnumber{ #2}\thmnote{ (#3)}.}
\newtheoremstyle{definitionsc}
{}
{}
{\normalfont}
{}
{\large\scshape}
{ }
{ }
{\thmname{#1}\thmnumber{ #2}\thmnote{ (#3)}.}
\newtheoremstyle{remarksc}
{0.5\topsep}
{0.5\topsep}
{\normalfont}
{}
{\large\itshape}
{}
{ }
{\thmname{#1}\thmnumber{ #2}\thmnote{ (#3)}.}
\numberwithin{equation}{section}
\theoremstyle{plainsc}
\newtheorem{theorem}{Theorem}[section]
\newtheorem{proposition}[theorem]{Proposition}
\newtheorem{corollary}[theorem]{Corollary}
\theoremstyle{definitionsc}
\newtheorem{definition}[theorem]{Definition}
\newtheorem{example}[theorem]{Example}
\theoremstyle{remarksc}
\newtheorem{remark}[theorem]{Remark}
 \newcommand{\addQEDstyle}[2]{\AtBeginEnvironment{#1}{\pushQED{\qed}\renewcommand{\qedsymbol}{#2}}\AtEndEnvironment{#1}{\popQED}}
\newcommand{\balpha}{\bm{\alpha}}
\newcommand{\bell}{\bm{\ell}}
\newcommand{\bzeta}{\bm{\zeta}}
\DeclareMathOperator{\changes}{\mathbf{Ch}}
\newcommand{\sign}{\operatorname{sign}}
\newcommand{\const}{\operatorname{const}}
\newcommand{\pmset}[1]{\mathfrak{Z}^{#1}}
\newcommand{\er}{\mathrm{e}}
\newcommand{\ir}{\mathrm{i}}
\newcommand{\dr}{\mathrm{d}}
\renewcommand{\tilde}{\widetilde}
\renewcommand*\env@matrix[1][\arraystretch]{%
  \edef\arraystretch{#1}%
  \hskip -\arraycolsep
  \let\@ifnextchar\new@ifnextchar
  \array{*\c@MaxMatrixCols c}}
\renewcommand*{\arraystretch}{1.25}
\renewcommand{\pod}[1]{\allowbreak\mathchoice
  {\if@display \mkern 18mu\else \mkern 8mu\fi (#1)}
  {\if@display \mkern 18mu\else \mkern 8mu\fi (#1)}
  {\mkern4mu(#1)}
  {\mkern4mu(#1)}
}
\renewcommand\footnotemark{}
\begin{document}
\title{Inverse Steklov spectral problem for  curvilinear polygons\footnote{{\bf MSC(2020): } Primary 35R30. Secondary 35P20.}}
\author{
Stanislav Krymski\hspace{-3ex}
\thanks{{\bf SK:} Department of Mathematics and Computer Science, St. Petersburg State University, 14th Line 29B, Vasilyevsky Island, St. Petersburg 199178, Russia; krymskiy.stas@yandex.ru}
\and
Michael Levitin\hspace{-3ex}
\thanks{{\bf ML:} Department of Mathematics and Statistics,
University of Reading, Whiteknights, PO Box 220, Reading RG6 6AX, UK;
M.Levitin@reading.ac.uk; \url{http://www.michaellevitin.net}}
\and Leonid Parnovski\hspace{-3ex}
\thanks{{\bf LP:} Department of Mathematics, University College London,
Gower Street, London WC1E 6BT, UK;
leonid@math.ucl.ac.uk}
\and Iosif Polterovich\hspace{-3ex}
\thanks{{\bf IP:} D\'e\-par\-te\-ment de math\'ematiques et de
sta\-tistique, Univer\-sit\'e de Mont\-r\'{e}al, CP 6128 succ
Centre-Ville, Mont\-r\'{e}al QC  H3C 3J7, Canada;
iossif@dms.umontreal.ca; \url{http://www.dms.umontreal.ca/\~iossif}}
\and David A. Sher
\thanks{{\bf DAS:} Department of Mathematical Sciences, DePaul University, 2320 N Kenmore Ave, Chicago, IL 60614, USA;
dsher@depaul.edu}
}
\date{\small July 31, 2020; final version to appear in IMRN (journal typesetting may differ), doi:10.1093/imrn/rnaa200}
\maketitle
\begin{abstract}
This paper studies the inverse Steklov spectral problem for curvilinear polygons.  For generic curvilinear polygons with angles  less than $\pi$, we  prove  that the asymptotics of Steklov eigenvalues obtained in \cite{LPPS}  determines, in a constructive manner, the number of  vertices and the properly ordered sequence of side lengths, as well as the angles up to a certain equivalence relation.   We also  present counterexamples to this statement  if the  generic assumptions fail.  
In particular, we show that there exist non-isometric triangles with asymptotically  close Steklov spectra. Among other techniques, we use a version of the Hadamard--Weierstrass factorisation theorem, allowing us to reconstruct a trigonometric  function from the asymptotics of its roots.
\end{abstract}
\tableofcontents
\section{Introduction and main results}
\subsection{Direct and inverse Steklov spectral problems}
Let $\Omega\subset\mathbb{R}^2$ be a bounded connected planar domain with connected Lipschitz  boundary $\partial\Omega$ of length  
$L=|\partial\Omega|$.
Consider the Steklov eigenvalue problem
\begin{equation}\label{eq:Steklov}
\Delta u =0\quad \text{in }\Omega,\qquad\qquad \frac{\partial u}{\partial n}=\lambda u\quad \text{on }\partial \Omega,
\end{equation}
with $\lambda$ being the spectral parameter, and $\dfrac{\partial u}{\partial n}$ being the exterior normal derivative. 

\smallskip

The spectrum of the Steklov problem is discrete: 
\[
0=\lambda_1(\Omega)<\lambda_2(\Omega)\le\dots \le \lambda_m(\Omega) \le \dots \nearrow +\infty.
\]
Equivalently, $\lambda_m$ may be viewed as the eigenvalues of the \emph{Dirichlet-to-Neumann map} $\mathcal{D}_\Omega$:
\[
\mathcal{D}_{\Omega}:H^{1/2}(\partial\Omega)\to H^{-1/2}(\partial\Omega),\qquad \mathcal{D}_{\Omega} f:=\left.\frac{\partial \mathcal{H}_\Omega f}{\partial n}\right|_{\Omega},
\] 
where  $\mathcal{H}_\Omega f$ denotes the harmonic extension of $f$ to $\Omega$.  

If the boundary $\partial \Omega$ is piecewise $C^1$, the Steklov eigenvalues have the following Weyl-type asymptotics (see \cite{Ag2006}):
\begin{equation}\label{eq:agranovich}
\lambda_m=\frac{\pi m}{ |\partial\Omega|} +o(m)\qquad\text{as }m\to+\infty.
\end{equation}

In the past decade, there has been a lot of research on the Steklov eigenvalue problem, see \cite{GP, LPPS} and references therein.
In particular, a significant amount of  information has been obtained on the {\it direct} spectral problem,  which is concerned with the dependence of the Steklov eigenvalues on the  underlying geometry.  The present paper  focuses on  the {\it inverse} spectral problem: which geometric properties of $\Omega$ are determined by the Steklov spectrum? 

Let
\begin{equation}\label{eq:Lambda}
\Lambda=\Lambda_{\Omega}:=\{\lambda_1, \lambda_2,\dots\}
\end{equation}
be a multiset  given by the Steklov eigenvalues of $\Omega$ with the account of multiplicities. We say that two domains $\Omega_1$ and $\Omega_2$ are {\it Steklov isospectral} if 
$\Lambda_{\Omega_1} = \Lambda_{\Omega_2}$. Interestingly enough,  no examples of non-isometric Steklov isospectral planar domains are presently known \cite[Open problem 6] {GP}; we refer also to \cite{Edw93b,MS, JS, JS18}  for some related results and conjectures.  At the same time, Steklov spectral invariants of planar domains are also quite scarce.
It follows from Weyl's law \eqref{eq:agranovich} that the perimeter of $\Omega$ is such an invariant. Moreover, if the boundary of $\Omega$ is smooth, the Steklov spectrum determines the number of boundary components and their lengths \cite{GPPS}. However, for smooth simply connected planar domains, extracting further  geometric information from the Steklov problem is quite difficult. In part, the reason is that in this case the Dirichlet-to-Neumann map $\mathcal{D}_{\Omega}$ is a pseudodifferential operator of order one on the circle, and the remainder estimate in Weyl's law \eqref{eq:agranovich} could be significantly improved  \cite{Ros, Ed}:
\begin{equation}
\label{RGM}
\lambda_{2m}=\lambda_{2m+1}+O\left(m^{-\infty}\right)=\frac{2\pi m}{|\partial\Omega|}+O\left(m^{-\infty}\right), \qquad m\to +\infty.
\end{equation}
As a result, no other spectral invariants except the perimeter could be obtained from the eigenvalue asymptotics on the polynomial scale.
\begin{definition}
\label{def:quasiisosp}
We say that two bounded planar domains $\Omega_1$ and $\Omega_2$ are \emph{Steklov quasi-isospectral} if their respective Steklov eigenvalues are asymptotically $o(1)$-close:
$\lambda_m(\Omega_1)-\lambda_m(\Omega_2)=o(1)$ as $m\to \infty$.
\end{definition}

In particular, any two Steklov isospectral domains are also Steklov quasi-isospectral. It also follows from \eqref{RGM} that {\em all} smooth simply-connected planar domains of given perimeter are Steklov quasi-isospectral; moreover, in view of \eqref{RGM},  $o(1)$-closeness of the corresponding eigenvalues  immediately implies $o\left(m^{-\infty}\right)$-closeness as $m\to \infty$, cf. Remark \ref{rem:closeness}.

\begin{remark} Similarly to \cite{KurSuhr}, one may also call two Steklov quasi-isospectral planar domains \emph{asymptotically Steklov isospectral}. Our choice of terminology is motivated by Corollary \ref{cor:quasi} below. 
\end{remark}

In the present paper we investigate the inverse spectral problem on curvilinear polygons. In this case, the asymptotic formula \eqref{RGM} does not hold even with a $o(1)$ error term.
 In fact, as was recently shown in \cite{LPPS}, the eigenvalue asymptotics depends in a delicate way on the number of vertices, the side lengths and the angles at the corner points. Moreover, \cite[Corollary 1.6]{LPPS}  implies that curvilinear polygons having the same respective edge lengths and angles are quasi-isospectral, see also Theorem \ref{thm:maindirect}.
It is therefore natural to ask whether these geometric features of a curvilinear polygon are determined by the Steklov spectrum. 
\subsection{Steklov spectrum of a curvilinear polygon}
Let $\mathcal{P}=\mathcal{P}(\balpha,\bell)$ be a curvilinear polygon with angles $\balpha=(\alpha_1,\dots,\alpha_n)$ and side lengths $\bell=(\ell_1,\dots,\ell_n)\in\mathbb{R}^n_+$
(see Figure \ref{fig:fig1}).  Note that the vertices $V_j$ and the edges $I_j$ (of length $\ell_j$) are enumerated clock-wise. The angle $\alpha_j$ at the vertex $V_j$ is formed by the edges $I_j$ and $I_{j+1}$, $j=1,\dots, n$. Here and further on we use cyclic subscript identification $n+1 \equiv 1$.
Throughout the paper, we assume that the sides of the polygon are smooth, and  that $\balpha=(\alpha_1,\dots,\alpha_n)\in(0,\pi)^n$, i.e.\ we only consider polygons with angles less than $\pi$, see also Remark \ref{lessthanpi}.
We  denote by
\begin{equation}\label{eq:Ldef}
L=L_{\bell}=|\partial\mathcal{P}|:=\ell_1+\dots\ell_n
\end{equation}
the perimeter of $\mathcal{P}$.
\begin{figure}[hbt]
\begin{center}
\includegraphics[width=4in]{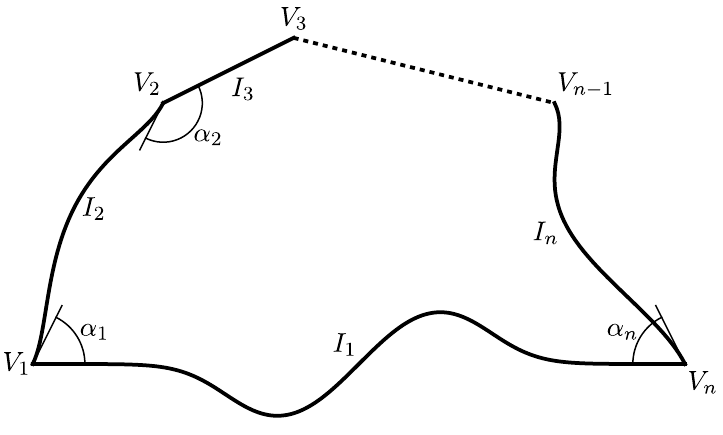}
\caption{A curvilinear polygon\label{fig:fig1}}
\end{center}
\end{figure}

An asymptotic characterisation of the Steklov spectrum of $\mathcal{P}(\balpha, \bell)$, denoted as in \eqref{eq:Lambda}  by $\Lambda_{\mathcal{P}(\balpha, \bell)}$, 
in terms of the zeros of a certain trigonometric polynomial determined by $\balpha, \bell$, was obtained in \cite{LPPS}. We recall this construction below.

For given vectors $\balpha\in(0,\pi)^n$, $\bell\in\mathbb{R}^n_+$, define the \emph{characteristic polynomial} of the Steklov problem \eqref{eq:Steklov} on $\mathcal{P}(\balpha, \bell)$ as the trigonometric polynomial\footnote{Note a  slight change of notation compared to \cite{LPPS}. Note also that in this paper we use the term ``trigonometric polynomial'' in a generalised sense as we allow the frequencies to be incommensurable.} of a real variable 
$\sigma$,
\begin{equation}\label{eq:Fdef}
F_{\balpha,\bell}(\sigma):=\sum_{\bzeta\in\pmset{n}_+} \mathfrak{p}_{\bzeta} \cos(|\bell\cdot\bzeta|\sigma)-\prod_{j=1}^n\sin\left(\frac{\pi^2}{2\alpha_j}\right),
\end{equation}
where
\[
\pmset{n}=\{\pm1\}^n,\qquad \pmset{n}_+=\{1\}\times\pmset{n-1}\subset\pmset{n}
\]
(that is, $\pmset{n}$ is the set of all sequences of $\pm 1$ of length $n$, and $\pmset{n}_+$ is its subset which includes only sequences starting with $+1$),
\begin{equation}
\label{eq:pfrakdefn}
\mathfrak{p}_{\bzeta}=\mathfrak{p}_{\bzeta}(\balpha):=\prod_{j\in\changes(\bzeta)}\cos\frac{\pi^2}{2\alpha_j},
\end{equation}
and for a vector $\bzeta=(\zeta_1,\dots,\zeta_n)\in \pmset{n}$ with cyclic identification $\zeta_{n+1}\equiv\zeta_1$,
\begin{equation}\label{eq:Cintrodef}
\changes(\bzeta):=\{j \in\{1,\dots,n\}\mid \zeta_j\ne \zeta_{j+1} \} .
\end{equation}
We  denote the class of all possible characteristic polynomials by
\[
\label{eq:charpol}
\mathcal{F}:=\left\{F_{\balpha,\bell}(\sigma)\mid \balpha\in(0,\pi)^n, \bell\in\mathbb{R}^n_+, n\in\mathbb{N}\right\}.
\]
If $F_{\balpha,\bell}(0)=0$, let
\[
2m_0=2m_{0,\balpha,\bell}
\]
be the multiplicity of zero as a root of $F_{\balpha,\bell}$ (this mulitplicity is always even since  $F_{\balpha,\bell}(\sigma)$ is an even function of $\sigma$), otherwise set $m_0=0$.

Denote by $\sigma_1\le\sigma_2\le\dots$, the non-negative roots of \eqref{eq:Fdef} taken with account of their  multiplicities (except $\sigma=0$ which, if present, is taken with \emph{half} its  multiplicity, that is $m_0$). We call them  \emph{quasi-eigenvalues} of the Steklov problem \eqref{eq:Steklov} on $\mathcal{P}(\balpha, \bell)$. Let
\begin{equation}\label{eq:Sigmadef}
\Sigma=\Sigma_{\balpha,\bell}:=\{\sigma_1, \sigma_2, \dots\},
\end{equation}
with account of multiplicities as above. 
\begin{remark}
\label{quantum0}
As was shown in \cite[Subsection 2.5]{LPPS}, the quasi-eigenvalues  $\Sigma$ may be also viewed as the square roots of the eigenvalues of a certain quantum graph Laplacian $\mathcal{G}_{\balpha,\bell}$, where the metric graph is circular and  is modelled  on the boundary of $\mathcal{P}$, and the matching conditions are determined by the angles at the vertices. See  Remark \ref{rem:quantum} for a further discussion. 
\end{remark}
One of the main results of \cite{LPPS} is
\begin{theorem}[{\cite[Theorem 2.16]{LPPS}}]\label{thm:maindirect} Let  $\mathcal{P}(\balpha,\bell)$ be a curvilinear polygon with angles $\balpha=(\alpha_1,\dots,\alpha_n)\in(0,\pi)^n$, and side lengths $\bell=(\ell_1,\dots,\ell_n)\in\mathbb{R}^n_+$, and let $\Lambda_{\mathcal{P}(\balpha,\bell)}$ and $\Sigma_{\balpha,\bell}$ be defined by \eqref{eq:Lambda} and \eqref{eq:Sigmadef}. Then, with some $\epsilon>0$,
\begin{equation}\label{eq:quasibound}
\lambda_m-\sigma_m=O\left(m^{-\epsilon}\right),\qquad\text{as }m\nearrow +\infty.
\end{equation}
\end{theorem}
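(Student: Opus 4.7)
The plan is to reduce the Steklov problem on $\mathcal{P}(\balpha,\bell)$ to a one-dimensional model on $\partial\mathcal{P}$, construct high-frequency quasi-modes localised near the boundary, and match their asymptotic behaviour across vertices using an explicit scattering analysis for the Laplacian on an infinite wedge. The resulting quantisation condition around the polygon will be shown to coincide with $F_{\balpha,\bell}(\sigma)=0$, and the errors incurred by the model will give the bound \eqref{eq:quasibound}.

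First, I would observe that high-frequency Steklov eigenfunctions decay exponentially away from $\partial\mathcal{P}$: in a tubular neighbourhood of a smooth edge $I_j$, after straightening coordinates $(s,d)$ (arclength and distance to the boundary), an eigenfunction takes the form $u(s,d)\sim (A_j(s)\er^{\ir\sigma s}+B_j(s)\er^{-\ir\sigma s})\er^{-\sigma d}$ modulo lower-order contributions from curvature. Thus, up to controllable errors, on each edge $I_j$ the restriction of $u$ to $\partial\mathcal{P}$ is encoded by two complex amplitudes $(A_j,B_j)$ that are propagated with phase $\er^{\pm\ir\sigma\ell_j}$. The curvature of the smooth arcs only enters at subprincipal order in the symbol of $\mathcal{D}_{\Omega}$, contributing a perturbation that is negligible on the polynomial scale.

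Second, near each vertex $V_j$ I would freeze the geometry to an infinite sector of opening $\alpha_j$ and compute the scattering matrix by separation of variables. The classical Steklov problem on an infinite wedge admits an explicit solution in terms of Mellin transforms whose reflection and transmission coefficients are rational expressions in $\cos(\pi^2/(2\alpha_j))$ and $\sin(\pi^2/(2\alpha_j))$. These are precisely the quantities appearing in \eqref{eq:pfrakdefn}, so that the local vertex scattering matrix determines how the incoming amplitudes on $I_j$ are mapped to the outgoing amplitudes on $I_{j+1}$. A gluing argument in weighted Sobolev spaces adapted to the conical singularity is needed to certify that the wedge scattering matrix indeed captures the leading vertex contribution, with an error of order $O(\sigma^{-\epsilon})$ for some $\epsilon>0$.

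Third, concatenating the $n$ edge propagators $\er^{\pm\ir\sigma\ell_j}$ with the $n$ vertex scattering matrices around the closed boundary yields a monodromy relation whose consistency condition, after expansion over all sign sequences $\bzeta\in\pmset{n}$, is exactly $F_{\balpha,\bell}(\sigma)=0$. For each root $\sigma_m$ one then exhibits a boundary trace solving the secular equation, extends it harmonically into $\mathcal{P}$, and verifies that the resulting function is a quasi-mode with Steklov defect $O(\sigma_m^{-\epsilon})$ in an appropriate norm; standard min-max then yields $|\lambda_m-\sigma_m|=O(m^{-\epsilon})$. The main obstacle is not the existence of quasi-modes but the correct enumeration: one must show that the construction produces asymptotically the right density of approximate eigenvalues (so that the labels $m$ on the two sides genuinely match), and that near-multiplicities in the roots of $F_{\balpha,\bell}$ do not destroy the perturbative pairing. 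This requires a Weyl-type count for the zeros of $F_{\balpha,\bell}$, together with a completeness argument showing that no further Steklov eigenvalues appear between consecutive quasi-eigenvalues; handling this, together with the corner gluing in weighted spaces, is the technical heart of the proof.
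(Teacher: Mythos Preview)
The paper does not contain a proof of this theorem at all: Theorem~\ref{thm:maindirect} is quoted verbatim from \cite[Theorem~2.16]{LPPS} and used as a black box throughout. There is therefore nothing in the present paper to compare your argument against; the authors simply import the result and build the inverse theory (Theorems~\ref{thm:A0}, \ref{thm:A}, \ref{thm:B}) on top of it.

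That said, your outline is broadly consistent with the strategy of \cite{LPPS} as it is alluded to here (see Remark~\ref{quantum0} on the quantum-graph interpretation and Remark~\ref{lessthanpi} on the role of the methods of \cite{LPPS17}). The ingredients you list --- boundary localisation of high-frequency Steklov eigenfunctions, exact wedge scattering producing the entries $\cos(\pi^2/2\alpha_j)$ and $\sin(\pi^2/2\alpha_j)$, concatenation into a monodromy whose secular equation is $F_{\balpha,\bell}(\sigma)=0$, and a counting/completeness step to align the labels --- are the right ones. What you have written is a plausible roadmap rather than a proof: the steps you flag as ``the technical heart'' (the corner gluing in weighted spaces, the error control $O(\sigma^{-\epsilon})$ for the wedge approximation, and the enumeration argument ruling out stray eigenvalues) are precisely where the substantial work in \cite{LPPS17,LPPS} lies, and none of it is carried out here. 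If you want to supply an actual proof you would need to execute those analyses; for the purposes of this paper, citing \cite{LPPS} is what the authors do and is sufficient.
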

Consequently, as was mentioned in the previous subsection, any two curvilinear polygons sharing the vectors $\bell$ and $\balpha$ are Steklov quasi-isospectral.
\begin{remark}
\label{lessthanpi} As was mentioned in \cite{LPPS}, numerical experiments indicate that Theorem \ref{thm:maindirect} holds also for polygons having angles greater or equal than $\pi$,
however there is a technicality in  the proof (which goes back to the methods of \cite{LPPS17}) that requires us to assume that all the angles are less than $\pi$. Still, we believe that all the results of the next section remain valid without this assumption. 
\end{remark}
\subsection{Main results} 
In order to state our main results, we need to introduce some additional notation. Recall that in \cite{LPPS} we distinguish the set of \emph{special angles}
\[
\mathcal{S}:=\left\{\frac{\pi}{2k+1}\mid k\in\mathbb{N}\right\},
\]
and the set of \emph{exceptional angles}
\[
\mathcal{E}:=\left\{\frac{\pi}{2k}\mid k\in\mathbb{N}\right\}.
\]
In what follows we say that a curvilinear polygon is {\it non-exceptional} if it has no exceptional angles.
Additionally, for $\balpha\in(0,\pi)^n$, we will define the corresponding {\em cosine vector}
\begin{equation}\label{eq:calpha}
\mathbf{c}=\mathbf{c}_{\balpha}=\left(c_1,\dots,c_n\right)\in[-1,1]^n,
\end{equation}
where
\[
c_j:=c(\alpha_j):=\cos\frac{\pi^2}{2\alpha_j},\qquad j=1,\dots,n.
\]
Note that an angle $\alpha$ is not special iff  $c(\alpha)\ne 0$, and  $\alpha$ is not exceptional iff $|c(\alpha)|<1$. For an exceptional angle $\alpha=\frac{\pi}{2k}$ with $k\in\mathbb{N}$ we have
\[
\mathcal{O}(\alpha):=c(\alpha)=(-1)^k,
\]
and as in  \cite{LPPS} we will call this quantity the \emph{parity} of an exceptional angle $\alpha$.

\begin{definition}
\label{def:loosely}
We say that  two curvilinear polygons $\mathcal{P}(\balpha,\bell)$ and $\tilde{\mathcal{P}}(\tilde{\balpha}, \tilde{\bell})$ are {\it loosely equivalent} if one can choose the orientation and  the enumeration of vertices of these polygons in such a way that $\bell=\tilde{\bell}$ and either $\mathbf{c}_{\balpha}=\mathbf{c}_{\tilde{\balpha}}$ or $\mathbf{c}_{\balpha}=-\mathbf{c}_{\tilde{\balpha}}$.
\end{definition}
\begin{remark}
\label{ftn:c}
The correct enumeration of the components of $\mathbf{c}_{\balpha}$ depends on the enumeration of the components of $\bell$, since an angle $\alpha_j$ lies between sides $\ell_j$ and $\ell_{j+1}$. For example,  let  $\bell=(\ell_1,\dots,\ell_n)$  and the corresponding cosine vector $\mathbf{c}=(c_1,\dots,c_n)$;   if  the orientation of $\bell$  is changed, say as $(\ell_1,\ell_n,\dots,\ell_2)$, the corresponding cosine vector is given by $(c_n, c_{n-1}, \dots, c_1)$ --- note a shift in the indexing.
\end{remark}
\begin{definition}
\label{def:healthy}
We say that a curvilinear $n$-gon  $\mathcal{P}(\balpha, \bell)$
 is {\it admissible} if 
\begin{equation}\label{eq:ellcond}
\text{the lengths $\ell_1,\dots,\ell_n$ are  incommensurable over $\{-1,0,+1\}$,}
\end{equation}
{\rm(}that is, only a trivial linear combination of  $\ell_1,\dots,\ell_n$ with these coefficients vanishes{\rm)}, and
\begin{equation}\label{eq:alphacond}
\text{all angles $\alpha_1,\dots,\alpha_n$  are not special}.
\end{equation} 
\end{definition}
Clearly, being admissible  is a generic condition for curvilinear polygons. It is essential for our statements to hold, see Remark \ref{rem:adm}.

Let us now formulate the first main result of the paper. It may be thought of as a converse statement to \cite[Corollary 1.6]{LPPS}.
\begin{theorem}
\label{thm:mainnoexc}  
Let $\mathcal{P}$ and $\widetilde{\mathcal{P}}$  be two Steklov quasi-isospectral  admissible  curvilinear polygons.  Suppose that $\mathcal{P}$ is non-exceptional. Then $\widetilde{\mathcal{P}}$ is  loosely equivalent to $\mathcal{P}$.  
\end{theorem}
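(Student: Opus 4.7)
The plan is to transfer the inverse Steklov problem into an inverse problem for the characteristic polynomial $F_{\balpha,\bell}$, and then decode the geometry from its trigonometric expansion. \emph{Step 1 (from $\lambda_m$ to the quasi-eigenvalues).} Theorem \ref{thm:maindirect}, applied to both $\mathcal{P}$ and $\widetilde{\mathcal{P}}$, combined with the quasi-isospectrality hypothesis, yields $\sigma_m - \tilde\sigma_m = o(1)$ as $m\to\infty$. Hence the non-negative zero sets of $F_{\balpha,\bell}$ and $F_{\tilde\balpha,\tilde\bell}$, taken with multiplicities and with the convention about $\sigma=0$ from \eqref{eq:Sigmadef}, are asymptotically $o(1)$-close.

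\emph{Step 2 (reconstruction of $F$ from its zeros).} The term $\bzeta=(1,\dots,1)\in\pmset{n}_+$ in \eqref{eq:Fdef} contributes $\cos(L_\bell\sigma)$ with coefficient $\mathfrak{p}_\bzeta=1$ (empty product over $\changes(\bzeta)=\emptyset$), while $|\bell\cdot\bzeta|<L_\bell$ for every other $\bzeta\in\pmset{n}_+$. Thus $F_{\balpha,\bell}$ is an even entire function of exponential type exactly $L_\bell$, with normalised leading cosine. The Hadamard--Weierstrass-type theorem announced in the abstract asserts that such a function is determined by the asymptotic distribution of its zeros, together with the multiplicity at the origin, up to a multiplicative constant. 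Applying this to the two polygons, matching the exponential types first gives $L_\bell=L_{\tilde\bell}$, and the common normalisation $1$ of the leading cosine then upgrades proportionality to the identity
\begin{equation}\label{eq:Feq}
F_{\balpha,\bell}(\sigma)\equiv F_{\tilde\balpha,\tilde\bell}(\sigma).
\end{equation}

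\emph{Step 3 (decoding geometry from \eqref{eq:Feq}).} The admissibility condition \eqref{eq:ellcond} makes the map $\bzeta\mapsto|\bell\cdot\bzeta|$ injective on $\pmset{n}_+$, and \eqref{eq:alphacond} makes every coefficient $\mathfrak{p}_\bzeta$ nonzero. Equating the two sides of \eqref{eq:Feq} as cosine sums with pairwise distinct positive frequencies therefore forces a bijection $\bzeta\leftrightarrow\tilde\bzeta$ preserving both frequencies and coefficients; counting nonzero terms gives $2^{n-1}=2^{\tilde n-1}$, so $n=\tilde n$. Using the cyclic structure of $\changes$ in combination with incommensurability, this bijection is seen to be induced by a cyclic relabelling and possible reversal of $\{1,\dots,n\}$, reconstructing $\bell=\tilde\bell$ as cyclic sequences up to orientation. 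The remaining coefficient identities, applied to all $\bzeta$ with $|\changes(\bzeta)|=2$, recover every pairwise product $c_jc_k$, and hence $\mathbf{c}_{\balpha}$ up to a global sign; since $|\changes(\bzeta)|$ is always even, this sign is a genuine invariance of $F_{\balpha,\bell}$ and accounts exactly for the $\pm$ in Definition \ref{def:loosely}. Finally, the non-exceptional hypothesis on $\mathcal{P}$ makes the constant term $\prod_j\sin(\pi^2/(2\alpha_j))$ in \eqref{eq:Fdef} nonzero, which via \eqref{eq:Feq} forces $\widetilde{\mathcal{P}}$ to be non-exceptional as well, ruling out independent sign flips of individual $c_j=\pm 1$ that \eqref{eq:pfrakdefn} would otherwise permit.

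The main obstacle is \emph{Step 2}: one must strengthen the classical Hadamard--Weierstrass factorisation so that only $o(1)$-closeness of zeros at infinity—rather than exact coincidence—suffices to conclude identity of two entire functions of exponential type whose frequencies need not be commensurable. This requires delicate asymptotic control of a canonical product with possibly incommensurable exponents, and is, as the abstract suggests, the technical heart of the paper. Once it is in place, the combinatorial Step 3 reduces to algebraic bookkeeping made possible by the generic admissibility hypothesis.
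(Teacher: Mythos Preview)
Your high-level decomposition---quasi-isospectral $\Rightarrow$ identical characteristic polynomials $\Rightarrow$ loose equivalence---is exactly the paper's route: Theorem~\ref{thm:mainnoexc} is deduced from Theorems~\ref{thm:A0} and~\ref{thm:B}.

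Two points of divergence are worth noting. In Step~2 you propose a direct ``strengthened Hadamard--Weierstrass'' that would pass from $o(1)$-close zeros to identity of the functions. The paper does not do this directly. Instead it invokes \cite[Theorem~6]{KurSuhr}, a result on almost periodic functions with only real zeros, to conclude first that $\Sigma_{\balpha,\bell}=\Sigma_{\tilde\balpha,\tilde\bell}$ \emph{exactly}; only then does the classical Hadamard--Weierstrass factorisation (Theorem~\ref{thm:infiniteproduct1}) give \eqref{eq:Feq}. So the ``technical heart'' you anticipate is outsourced rather than proved in-house; the paper's own Theorem~\ref{lem:goestozero}, which controls a canonical product built from the actual eigenvalues $\lambda_m$, serves the separate constructive Theorem~\ref{thm:A}.

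In Step~3 your assertion that the bijection $\bzeta\leftrightarrow\tilde\bzeta$ ``is seen to be induced by a cyclic relabelling and possible reversal'' is the crux and is not justified as stated. The frequency set $\mathcal{T}$ alone determines only the \emph{multiset} of lengths (this is Theorem~\ref{thm:inverse2}); any permutation of $\bell$, cyclic or not, yields the same $\mathcal{T}$. To recover the cyclic order one must exploit the coefficients, and the mechanism is not the generic ``cyclic structure of $\changes$'' but a concrete adjacency test: for $n>2$ the paper forms the matrix $D'_{j,k}=r(\{j\})r(\{k\})/r(\{j,k\})$ and observes that $D'_{j,k}<1$ exactly when sides $\ell'_j,\ell'_k$ are adjacent with a non-exceptional angle between them, since only then does $r(\{j,k\})$ fail to factor as $r(\{j\})r(\{k\})$. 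Walking along the off-diagonal entries $<1$ reconstructs the cyclic order, the $|c_j|$, and (via the signs of the diagonal entries) the vector $\mathbf{c}$ up to a global sign. Your sketch needs this or an equivalent device to close the argument.
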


In order to state the analogue of Theorem \ref{thm:mainnoexc} for polygons with exceptional angles we need additional terminology and notation from \cite{LPPS}. If there are $K>0$ exceptional angles, they split the boundary $\partial\mathcal{P}$ into $K$ \emph{exceptional boundary components} $\mathcal{Y}_1,\dots,\mathcal{Y}_K$; let $n_\kappa$ denote the number of boundary arcs in $\mathcal{Y}_\kappa$, $\kappa=1,\dots,K$. Without loss  of generality we can assume that the vertices of $\mathcal{P}$ are enumerated in such a way that  the endpoint of $\mathcal{Y}_K$ is the vertex $V_n$.  Each exceptional boundary component $\mathcal{Y}_\kappa$ is described by a vector of $n_\kappa$ lengths of its boundary arcs $\bell(\mathcal{Y}_\kappa):=\bell^{(\kappa)}=\left(\ell_1^{(\kappa)},\dots, \ell_{n_\kappa}^{(\kappa)}\right)$, and by a vector of $n_\kappa-1$ non-exceptional angles  between these arcs\footnote{Another slight change of notation compared to \cite{LPPS}.}, $\balpha(\mathcal{Y}_\kappa):=\balpha^{(\kappa)}=\left(\alpha_1^{(\kappa)},\dots, \alpha_{n_\kappa-1}^{(\kappa)}\right)$. Set in this case $\mathbf{c}(\mathcal{Y}_\kappa):=\mathbf{c}^{(\kappa)}=\left(\cos\frac{\pi^2}{2\alpha_1^{(\kappa)}},\dots, \cos\frac{\pi^2}{2\alpha_{n_\kappa-1}^{(\kappa)}}\right)$.  Denote also by $-\mathcal{Y}_\kappa$ the {\it inverse} of  
$\mathcal{Y}_\kappa$ obtained by reversing the  orientation (i.e. reversing the order of the vertices of $\mathcal{Y}_\kappa$).  We call an exceptional boundary component \emph{even} if the parities of exceptional angles at its ends coincide, and \emph{odd} if they are different.

As shown in \cite[Theorem 2.17(b)]{LPPS}, the exceptional  boundary components of a curvilinear polygon contribute to its set of quasi-eigenvalues independently.

Using the notation above, let us introduce the following analogue of Definition \ref{def:loosely} for exceptional boundary components.
\begin{definition}
\label{def:looselycomp}
Let $\mathcal{Y}$ and $\tilde{\mathcal{Y}}$ be two exceptional boundary components. We say that $\mathcal{Y}$ and $\tilde{\mathcal{Y}}$ are {\it loosely equivalent} if  they have the same parity and, by choosing $\tilde{\mathcal{W}}$ to be 
$\tilde{\mathcal{Y}}$ or $-\tilde{\mathcal{Y}}$, we have
$\bell(\mathcal{Y})=\bell(\tilde{\mathcal{W}})$ and either $\mathbf{c}(\mathcal{Y})=\mathbf{c}(\tilde{\mathcal{W}})$ or $\mathbf{c}(\mathcal{Y})=-\mathbf{c}(\tilde{\mathcal{W}})$.
\end{definition} 
In other words, two exceptional boundary components are loosely equivalent if their length vectors coincide modulo, possibly, a reversal of orientation, and once orientation is fixed, their cosine vectors coincide modulo, possibly, a global change of sign. 
\begin{theorem} 
\label{thm:mainexc}  
Let $\mathcal{P}$ and $\widetilde{\mathcal{P}}$ be two Steklov quasi-isospectral  admissible curvilinear polygons.  Suppose that $\mathcal{P}$ has  $K \ge 1$ exceptional boundary components $\mathcal{Y}_\kappa$,
$\kappa=1,\dots, K$.   Then $\widetilde{\mathcal{P}}$ also has $K$ exceptional boundary components 
which could be re-ordered in such a way that,  for any $\kappa$, its $\kappa$-th component becomes loosely equivalent to  $ \mathcal{Y_\kappa}$.
\end{theorem}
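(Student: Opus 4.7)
The plan is to reduce the exceptional case to the non-exceptional Theorem~\ref{thm:mainnoexc} applied component by component, after first establishing that the two polygons give rise to essentially the same characteristic polynomial and that this polynomial factors in an essentially unique way into pieces associated to individual exceptional boundary components.

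First, I would use Theorem~\ref{thm:maindirect} applied to both $\mathcal{P}$ and $\widetilde{\mathcal{P}}$ to pass from Steklov quasi-isospectrality to asymptotic closeness of their quasi-eigenvalues, $\sigma_m - \widetilde{\sigma}_m = o(1)$. Then I would invoke the Hadamard--Weierstrass style reconstruction theorem announced in the abstract to recover each characteristic polynomial from the asymptotic distribution of its zeros; this should yield $F_{\balpha,\bell} \equiv F_{\widetilde{\balpha},\widetilde{\bell}}$ as members of $\mathcal{F}$, possibly up to a harmless normalising constant.

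Second, by \cite[Theorem~2.17(b)]{LPPS}, the presence of exceptional angles in $\mathcal{P}$ makes the quasi-eigenvalue multiset decouple into the disjoint union of contributions from the exceptional boundary components $\mathcal{Y}_\kappa$, and correspondingly $F_{\balpha,\bell}$ assembles from per-component factors $F^{(\kappa)}$ whose frequencies live on signed sums of the edge lengths of $\mathcal{Y}_\kappa$ only. I would argue that the admissibility condition~\eqref{eq:ellcond} forces all such frequencies to be pairwise distinct, so the partition of the frequency support of $F_{\balpha,\bell}$ into per-component blocks is combinatorially unique; applying the same analysis to $F_{\widetilde{\balpha},\widetilde{\bell}}$ and comparing would then force $\widetilde{\mathcal{P}}$ to have exactly $K$ exceptional components and would produce a canonical bijection $\pi$ matching length multisets. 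Comparing the sign prefactor of each per-component factor, which records the parity $\mathcal{O}(\alpha)$ of the two exceptional angles bounding the component, then supplies the parity-matching part of Definition~\ref{def:looselycomp}.

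Finally, for each matched pair $(\mathcal{Y}_\kappa, \widetilde{\mathcal{Y}}_{\pi(\kappa)})$ the single-component factors are trigonometric polynomials of precisely the type appearing in the non-exceptional setting, only now with an open rather than cyclic vertex structure. I would rerun the argument underlying Theorem~\ref{thm:mainnoexc} on each such factor, which should recover the ordered length vector $\bell^{(\kappa)}$ up to reversal and the cosine vector $\mathbf{c}^{(\kappa)}$ up to a global sign, exactly as required by Definition~\ref{def:looselycomp}. The main obstacle, in my view, lies in the unique-factorisation step: proving that the block decomposition of $F_{\balpha,\bell}$ can be read off purely from the list of frequencies $|\bell\cdot\bzeta|$ and their multiplicities, which will require a careful bookkeeping of the vanishing pattern of the coefficients $\mathfrak{p}_{\bzeta}$ on those $\bzeta \in \pmset{n}_+$ for which $\changes(\bzeta)$ meets the set of exceptional-angle indices, combined with the rigidity afforded by incommensurability.
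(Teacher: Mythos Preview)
Your first step is correct and matches the paper: Steklov quasi-isospectrality of $\mathcal{P}$ and $\widetilde{\mathcal{P}}$ forces $F_{\balpha,\bell}\equiv F_{\widetilde{\balpha},\widetilde{\bell}}$ (this is exactly Theorem~\ref{thm:A0}, proved via Theorem~\ref{thm:maindirect}, Hadamard--Weierstrass, and \cite[Theorem~6]{KurSuhr}).

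From that point on, however, the paper does \emph{not} factor $F$ into per-component pieces and work component by component. Instead it treats $F$ as a single trigonometric polynomial: it first recovers the full sorted length vector $\bell'$ from the frequency set $\mathcal{T}$ (Theorem~\ref{thm:inverse2}), then builds the matrix $D'_{j,k}=R'_{j,j}R'_{k,k}/R'_{j,k}$ from the amplitudes $r(\{j\}),r(\{j,k\})$. The off-diagonal entries of $D'$ that are strictly less than $1$ reveal which sides are adjacent through a non-exceptional angle, so the number $K$ of exceptional angles drops out of the count \eqref{eq:Kdeduced}, and the exceptional boundary components, their lengths, parities, and cosine vectors are read off by walking the rows of $D'$. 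No factorisation of $F$ is ever needed.

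Your factorisation route is plausible in spirit but the specific mechanism you describe has a gap. By \cite[Theorem~2.17(b)]{LPPS} the quasi-eigenvalues split as a disjoint union over components; combined with Theorem~\ref{thm:infiniteproduct} this means $F$ is (up to a constant) a \emph{product} $\prod_\kappa F^{(\kappa)}$, not a sum. Consequently the frequency support of $F$ is the Minkowski sum of the per-component frequency sets, not their disjoint union, and there is no ``partition of the frequency support of $F_{\balpha,\bell}$ into per-component blocks'' to read off. (Concretely: with two exceptional angles at $V_1,V_2$ in a triangle and $c_1=c_2=1$, one finds $F(\sigma)=2\cos(\ell_2\sigma)\bigl[\cos((\ell_1+\ell_3)\sigma)+c_3\cos((\ell_1-\ell_3)\sigma)\bigr]$; the frequency $\ell_1+\ell_2+\ell_3$ of $F$ belongs to neither factor.) A unique-factorisation argument can probably be made to work, but it requires recovering the multiplicative decomposition of a trigonometric polynomial, which is a different and harder bookkeeping exercise than the additive one you sketch. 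Note also that the coefficients $\mathfrak{p}_{\bzeta}$ do not \emph{vanish} at exceptional angles (that happens at \emph{special} angles, where $c_j=0$); at exceptional angles $c_j=\pm1$, which is precisely what makes $F$ factor. Finally, the per-component factors $F^{(\kappa)}$ correspond to an open (path) vertex structure rather than the cyclic one underlying \eqref{eq:Fdef}, so you cannot literally ``rerun Theorem~\ref{thm:mainnoexc}''; an adapted version is needed, which is what the paper's $D'$-matrix argument provides uniformly for both cases.
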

\begin{remark} Theorems \ref{thm:mainnoexc} and \ref{thm:mainexc} imply that the number of vertices and the number of exceptional boundary components of a curvilinear polygon are 
Steklov spectral invariants, see also 
Theorem \ref{thm:B}(b).  Note that in Theorem \ref{thm:mainexc}, we cannot obtain any information on how the exceptional boundary components of $\widetilde{\mathcal{P}}$ 
are joined together.
\end{remark}

Theorems \ref{thm:mainnoexc} and \ref{thm:mainexc} follow directly from Theorems \ref{thm:A0} and \ref{thm:B}  below.
\begin{theorem}
\label{thm:A0}
Two curvilinear polygons are Steklov quasi-isospectral if and only if  their characteristic polynomials coincide.
\end{theorem}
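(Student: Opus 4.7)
The easier direction, that equality of characteristic polynomials implies quasi-isospectrality, is immediate from Theorem \ref{thm:maindirect}. If $F_{\balpha,\bell} \equiv F_{\tilde\balpha,\tilde\bell}$, then the multisets of quasi-eigenvalues $\Sigma_{\balpha,\bell}$ and $\Sigma_{\tilde\balpha,\tilde\bell}$ coincide term by term, so $\sigma_m = \tilde\sigma_m$ for every $m$. Applying \eqref{eq:quasibound} to both polygons and the triangle inequality yields $\lambda_m(\mathcal{P}) - \lambda_m(\widetilde{\mathcal{P}}) = O(m^{-\epsilon}) = o(1)$.

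For the converse, my plan is to use Theorem \ref{thm:maindirect} to convert the hypothesis into a statement purely about quasi-eigenvalues: from $\lambda_m(\mathcal{P}) - \lambda_m(\widetilde{\mathcal{P}}) = o(1)$ combined with $|\lambda_m - \sigma_m|, |\tilde\lambda_m - \tilde\sigma_m| = O(m^{-\epsilon})$, the triangle inequality gives $\sigma_m - \tilde\sigma_m = o(1)$. The task then reduces to the following rigidity statement for the class $\mathcal{F}$: if two characteristic polynomials $F_{\balpha,\bell}$ and $F_{\tilde\balpha,\tilde\bell}$ have their non-negative zero sequences (counted with multiplicity) differing by $o(1)$, then the functions must coincide. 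The natural tool, explicitly advertised in the abstract, is a Hadamard--Weierstrass-type factorisation for trigonometric polynomials. Both $F_{\balpha,\bell}$ and $F_{\tilde\balpha,\tilde\bell}$ extend to even entire functions of exponential type equal to the perimeter $L = \ell_1 + \dots + \ell_n$, and such a function should be determined by its (symmetrised) zero set together with a normalisation; in our setting the normalisation is pinned down by the fact that the Fourier coefficient of the top-frequency term $\cos(L\sigma)$ equals $\mathfrak{p}_{(1,\dots,1)} = 1$, since the sign vector $(1,\dots,1)$ has empty change set in \eqref{eq:pfrakdefn}.

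The main obstacle is to establish this rigidity statement in a form strong enough to handle $o(1)$-perturbations rather than exact coincidences of zeros. Three points require care: first, \emph{multiplicities}, since a zero of $F_{\balpha,\bell}$ of multiplicity $k$ could a priori be split into a cluster of $k$ nearby simple zeros of $F_{\tilde\balpha,\tilde\bell}$, and such clusters must be allowed to collapse in the limit; second, the frequencies $|\bell\cdot\bzeta|$ may be mutually \emph{incommensurable}, so the functions in $\mathcal{F}$ are genuinely almost-periodic and their zero-counting function is not periodic, which means one cannot simply invoke classical results on equidistant zeros of trigonometric polynomials; third, the \emph{overall normalisation} (and in particular the constant term $-\prod_j \sin(\pi^2/(2\alpha_j))$, which is the only part of $F$ not contributed by the cosines) must be shown to be reconstructable from zero data alone, modulo the rigid leading-coefficient condition noted above. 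I expect these points to be addressed by proving a dedicated Hadamard-type factorisation theorem for the class $\mathcal{F}$ as a separate technical ingredient of the paper, after which Theorem \ref{thm:A0} will follow essentially formally.
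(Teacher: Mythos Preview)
Your outline is correct and matches the paper's approach almost exactly: the ``if'' direction via \eqref{eq:quasibound}, the triangle-inequality reduction to $\sigma_m-\tilde\sigma_m=o(1)$, and the Hadamard--Weierstrass factorisation pinned down by the top-frequency coefficient $\mathfrak{p}_{(1,\dots,1)}=1$ are all as in the paper. The only structural difference concerns the rigidity step you flag as the main obstacle. The paper does \emph{not} prove a single perturbation-stable factorisation theorem for $\mathcal{F}$; instead it splits the argument in two. First, Theorem~\ref{prop:Aprime} (via Theorems~\ref{thm:infiniteproduct1}--\ref{thm:infiniteproduct} and the mean operator $\mathbf{M}$) shows that the \emph{exact} zero multiset $\Sigma$ determines $F$ uniquely. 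Second, and this is the piece you did not anticipate, the paper invokes an external result, \cite[Theorem~6]{KurSuhr}, which states that if two real almost-periodic functions with only real zeros have zero sequences that are asymptotically $o(1)$-close, then those zero sequences are in fact identical. This handles your concerns about multiplicities and incommensurability in one stroke, and reduces the converse direction to the exact-zeros case. So the ``separate technical ingredient'' you expect is not proved in the paper but imported from the literature on almost-periodic functions.
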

The ``if''  direction immediately follows from \eqref{eq:quasibound}. The ``only if'' part  is essentially proved in two steps. First, in Theorem \ref{prop:Aprime} we show that the polynomial $F_{\balpha,\bell}(\sigma)$   is uniquely determined by the collection of  its nonnegative zeros $\Sigma_{\balpha,\bell}$, which are the quasi-eigenvalues of $\mathcal{P}$.  This easily follows from the well-known Hadamard--Weierstrass factorisation theorem for entire functions \cite{Con}. Second, we deduce  from a general property of the zeros of almost periodic functions \cite[Theorem 6]{KurSuhr} and the asymptotic formula \eqref{eq:quasibound}  that the collection of quasi-eigenvalues $\Sigma_{\balpha, \bell}$ coincides for all Steklov quasi-isospectral curvilinear polygons.

Theorems \ref{thm:A0} and \ref{prop:Aprime} immediately imply
\begin{corollary}\label{cor:quasi} Two curvilinear polygons are Steklov quasi-isospectral if and only if their quasi-eigenvalues coincide.
\end{corollary}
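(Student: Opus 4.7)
The plan is to deduce the corollary immediately from the two structural results cited just above it: Theorem \ref{thm:A0} (quasi-isospectrality $\Leftrightarrow$ equality of characteristic polynomials) together with Theorem \ref{prop:Aprime} (the characteristic polynomial $F_{\balpha,\bell}$ is uniquely determined by its nonnegative zero multiset $\Sigma_{\balpha,\bell}$). Neither direction of the biconditional requires any new input; the corollary is essentially a translation of ``equality of characteristic polynomials'' into ``equality of quasi-eigenvalue multisets'' via the reconstruction statement.

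For the ``only if'' direction, I would assume that two polygons $\mathcal{P}(\balpha,\bell)$ and $\widetilde{\mathcal{P}}(\widetilde\balpha,\widetilde\bell)$ are Steklov quasi-isospectral. Theorem \ref{thm:A0} then gives $F_{\balpha,\bell} \equiv F_{\widetilde\balpha,\widetilde\bell}$ as functions on $\mathbb{R}$. Two identical entire functions have identical zero multisets, and respecting the half-multiplicity convention at $\sigma=0$ built into the definition \eqref{eq:Sigmadef} one reads off $\Sigma_{\balpha,\bell}=\Sigma_{\widetilde\balpha,\widetilde\bell}$.

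For the ``if'' direction I would instead assume $\Sigma_{\balpha,\bell}=\Sigma_{\widetilde\balpha,\widetilde\bell}$. Theorem \ref{prop:Aprime} then forces $F_{\balpha,\bell}\equiv F_{\widetilde\balpha,\widetilde\bell}$, and a second application of Theorem \ref{thm:A0} yields the required quasi-isospectrality. One may equally well bypass Theorem \ref{thm:A0} on this side by quoting Theorem \ref{thm:maindirect} directly: by the triangle inequality, $|\lambda_m - \widetilde\lambda_m|\le |\lambda_m-\sigma_m|+|\widetilde\sigma_m-\widetilde\lambda_m| = O(m^{-\epsilon})$, so the two spectra are in fact $O(m^{-\epsilon})$-close, not merely $o(1)$-close. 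There is no genuine obstacle to overcome here: all the mathematical content lives inside Theorems \ref{thm:A0} and \ref{prop:Aprime}, and the only bookkeeping to watch is the treatment of multiplicities --- in particular the convention $m_0$ at $\sigma=0$ --- when moving between the polynomial and zero-multiset descriptions.
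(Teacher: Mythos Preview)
Your proposal is correct and matches the paper's approach exactly: the paper simply states that Corollary~\ref{cor:quasi} follows immediately from Theorems~\ref{thm:A0} and~\ref{prop:Aprime}, and you have spelled out precisely this deduction in both directions. The additional remark about bypassing Theorem~\ref{thm:A0} in the ``if'' direction via Theorem~\ref{thm:maindirect} and the triangle inequality is a valid alternative but not needed.
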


\begin{remark} 
\label{rem:closeness}
Theorem \ref{thm:A0} together with formula  \eqref{eq:quasibound} also imply that if two curvilinear polygons $\mathcal{P}$ and $\widetilde{\mathcal{P}}$ are Steklov quasi-isospectral, there exists an $\epsilon>0$ such that $\lambda_m(\mathcal{P}) -\lambda_m(\widetilde{\mathcal{P}}) =O\left(m^{-\epsilon}\right)$ as $m \to \infty$. 
\end{remark}

We also prove the following  constructive modification of Theorem \ref{thm:A0}:
\begin{theorem}  
 \label{thm:A}
The characteristic polynomial   $F_{\balpha,\bell}(\sigma)$  defined by \eqref{eq:Fdef} can be reconstructed algorithmically from the Steklov spectrum of a corresponding  curvilinear polygon  
$\mathcal{P}(\balpha,\bell)$.
\end{theorem}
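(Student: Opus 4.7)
The plan is to convert the existence/uniqueness statements of Theorems~\ref{thm:A0} and~\ref{prop:Aprime} into a constructive procedure, by combining three ingredients: Weyl's law~\eqref{eq:agranovich}, the Hadamard--Weierstrass factorisation already used in the proof of Theorem~\ref{prop:Aprime}, and the quantitative error bound~\eqref{eq:quasibound} from Theorem~\ref{thm:maindirect}.

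I will proceed in three stages. First, I would extract the perimeter $L=\lim_{m}\pi m/\lambda_m$ from Weyl's law, which also pins down the exponential type of the target $F_{\balpha,\bell}$, since its top-frequency cosine in~\eqref{eq:Fdef} is $\cos(L\sigma)$ with coefficient $\mathfrak{p}_{(1,\dots,1)}=1$ (the all-ones vector has empty $\changes$ set). Second, I would recover the quasi-eigenvalue sequence $\{\sigma_n\}$: since $\mathcal{F}$ is a countable union of finite-dimensional families parameterised by $(n,\balpha,\bell)$, the algorithm enumerates candidates $F_{\balpha',\bell'}\in\mathcal{F}$ with rational data and, for each, checks whether its nonnegative zero sequence satisfies $\lambda_m-\sigma'_m\to 0$; by Theorem~\ref{thm:A0} there is at most one such match, and by~\eqref{eq:quasibound} the actual $F_{\balpha,\bell}$ provides existence. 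Third, with $\{\sigma_n\}$ and the origin multiplicity $2m_0$ in hand, I would apply Hadamard--Weierstrass to write
\[
F_{\balpha,\bell}(\sigma) \;=\; C\,\sigma^{2m_0}\!\prod_{\sigma_n>0}\!\left(1-\tfrac{\sigma^2}{\sigma_n^2}\right),
\]
where the product converges locally uniformly since $\sigma_n\sim\pi n/L$ gives $\sum\sigma_n^{-2}<\infty$; the scalar $C$ is then fixed by imposing
\[
\lim_{T\to\infty}\frac{2}{T}\int_0^T F_{\balpha,\bell}(\sigma)\cos(L\sigma)\,\dr\sigma \;=\; 1,
\]
the normalisation equivalent to the $\cos(L\sigma)$-Bohr coefficient of $F_{\balpha,\bell}$ being~$1$.

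The hard part will be the second stage: converting the merely pointwise bound $\lambda_m-\sigma_m=O(m^{-\epsilon})$ into an \emph{exact} identification of the almost periodic function $F_{\balpha,\bell}$. Here the rigidity of zero sets of almost periodic functions \cite[Theorem~6]{KurSuhr} is the essential tool --- it guarantees that two such functions with asymptotically matching zero sets must coincide, so that the enumeration-and-selection procedure produces a well-defined output rather than a continuous family of near-matches. Combining this rigidity with the finite-dimensional structure of $\mathcal{F}$ to manufacture a bona fide algorithm, rather than a purely non-constructive uniqueness statement, is where the main technical effort will lie.
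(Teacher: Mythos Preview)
Your stage~2 is where the proposal breaks down, and it is precisely the step that the paper's proof of Theorem~\ref{thm:A} is designed to circumvent.  There are two concrete problems.  First, an enumeration over \emph{rational} data $(\balpha',\bell')$ will never hit the actual $F_{\balpha,\bell}$ when $\balpha,\bell$ are irrational, so ``existence'' in your argument fails; you would need some approximation scheme, and then the uniqueness from \cite[Theorem~6]{KurSuhr} no longer applies, because near every $F_{\balpha,\bell}$ there is a continuum of nearby $F_{\balpha',\bell'}$ whose zeros are also $o(1)$-close to the $\lambda_m$.  Second, ``check whether $\lambda_m-\sigma'_m\to 0$'' is not a finitary test: no finite segment of the sequence can certify an asymptotic statement.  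You acknowledge this gap in your final paragraph, but the proposal contains no mechanism to close it.

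More to the point, invoking \cite{KurSuhr} at all defeats the purpose of Theorem~\ref{thm:A}.  The paper states explicitly (just after the theorem) that this proof ``does not rely on the results of \cite{KurSuhr}''; the theorem is meant to be a constructive alternative to Theorem~\ref{thm:A0}, not a repackaging of it.  The paper's idea is to bypass the recovery of $\{\sigma_m\}$ altogether: build the Hadamard product $Q_\Lambda(\sigma)=\sigma^{2n_0}\prod_{m>n_0}(1-\sigma^2/\lambda_m^2)$ directly from the \emph{known} Steklov eigenvalues, and then prove quantitatively (Theorem~\ref{lem:goestozero}) that $Q_\Lambda(\sigma)-C_0 Q_\Sigma(\sigma)\to 0$ as $\sigma\to\infty$.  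This is the place where the polynomial rate $O(m^{-\epsilon})$ in \eqref{eq:quasibound} is genuinely used --- a mere $o(1)$ bound would not suffice to control the infinite product ratio.  Once $Q_\Lambda=C_1^{-1}F_{\balpha,\bell}+o(1)$, the Bohr--Fourier extraction (your stage~3, which is correct) applies to $Q_\Lambda$ itself, since $o(1)$ terms have vanishing Bohr coefficients.  Thus the paper replaces your non-constructive search for $\{\sigma_m\}$ by a direct analytic estimate on $Q_\Lambda$.
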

The proof of Theorem \ref{thm:A}  also uses the Hadamard--Weierstrass factorisation, but does not rely on the results of \cite{KurSuhr}.
 Instead, we use in an essential way the polynomial decay of the error estimate  \eqref{eq:quasibound},   see subsection \ref{subsec:AA} for details.
\begin{theorem}
\label{thm:B}
Given the  characteristic polynomial $F(\sigma)=F_{\balpha,\bell}(\sigma)$ of an admissible  curvilinear polygon $\mathcal{P}(\balpha,\bell)$, we can recover the number of vertices $n$ and 
the number of exceptional angles $K\ge 0$.
Moreover, 
\begin{enumerate}
\item[\rm{(a)}] If there are no exceptional angles, that is $K=0$, we can recover the vector of side-lengths $\bell$ modulo cyclic shifts and a reversal of orientation, and, once the enumeration of vertices is fixed (cf. Remark \ref{ftn:c}), we can also recover the vector $\mathbf{c}_{\balpha}$ modulo a global change of sign.
\item[\rm{(b)}] If the number of exceptional angles is $K\ge 1$, then for each exceptional boundary component $\mathcal{Y}_\kappa$, $\kappa=1,\dots,K$, we can determine whether it is even or odd, and obtain the number $n_\kappa$ of its constituent boundary arcs, the vector of their lengths $\bell^{(\kappa)}$ modulo a reversal of orientation,  and, once the orientation is fixed, we can also recover the vector $\mathbf{c}^{(\kappa)}$ modulo a global change of sign.
\end{enumerate}
\end{theorem}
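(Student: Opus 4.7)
The plan is to describe an algorithm that reconstructs the claimed data from $F(\sigma)$, proceeding in stages of Bohr--Fourier extraction and combinatorial disentangling.

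The first stage treats $F$ as a real Bohr almost periodic function. The incommensurability half of admissibility forces the frequencies $L_{\bzeta}=|\bell\cdot\bzeta|$, $\bzeta\in\pmset{n}_+$, to be pairwise distinct, so the standard Bohr averages recover from $F$ both the unlabeled set of pairs $\{(L_{\bzeta},\mathfrak{p}_{\bzeta})\}_{\bzeta\in\pmset{n}_+}$ and the constant term $-\prod_{j=1}^n \sin(\pi^2/(2\alpha_j))$. The non-speciality half of admissibility forces $c_j\ne 0$ for every $j$, so every $\mathfrak{p}_{\bzeta}\ne 0$ and the spectrum has exactly $2^{n-1}$ points, determining $n$. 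The constant term is nonzero iff $K=0$, separating the cases; when $K\ge 1$, the cardinality of $\{\bzeta : |\mathfrak{p}_{\bzeta}|=1\}$ equals $2^{K-1}$ (because this set consists precisely of those $\bzeta$ with $\changes(\bzeta)$ an even subset of the set of exceptional vertices), recovering $K$ itself.

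In case (a), the perimeter $L=\max L_{\bzeta}$ is uniquely attained at the all-ones $\bzeta$ (amplitude $1$), and the values $(L\pm L_{\bzeta})/2$ sweep out the $2^n$ (pairwise distinct) subset sums of $\bell$, from which the multiset $\{\ell_j\}$ is recovered by standard subset-sum inversion. To restore cyclic order I use the key structural fact that $|\changes(\bzeta)|$ equals the number of cosine factors in $\mathfrak{p}_{\bzeta}$: a subset sum equal to $\ell_i+\ell_k$ for a cyclic-adjacent pair $\{i,k\}$ is realised by a $\bzeta$ with $|\changes(\bzeta)|=2$ (one cyclic arc, two cosine factors), whereas a non-adjacent pair gives $|\changes(\bzeta)|=4$ with four cosine factors. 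Combining this dichotomy with the $n$ single-edge-arc frequencies $|L-2\ell_j|$ (amplitudes $c_{j-1}c_j$) and the $n$ two-edge-arc frequencies $|L-2(\ell_j+\ell_{j+1})|$ (amplitudes $c_{j-1}c_{j+1}$) pins down the cyclic order of $\bell$ up to shift and reversal. With the enumeration fixed, the identity
\[
c_j^2 \;=\; \frac{\mathfrak{p}_{\{j-1,j\}}\,\mathfrak{p}_{\{j,j+1\}}}{\mathfrak{p}_{\{j-1,j+1\}}}
\]
yields each $|c_j|$, and the signs of the pair amplitudes determine $\mathbf{c}$ modulo a single global sign, matching Definition~\ref{def:loosely}.

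In case (b), by \cite[Theorem 2.17(b)]{LPPS} the characteristic polynomial decouples into factors indexed by the exceptional boundary components $\mathcal{Y}_\kappa$. The positions of the exceptional vertices along $\partial\mathcal{P}$, obtained from the set where $|\mathfrak{p}_{\bzeta}|=1$, yield the partition of the boundary, the counts $n_\kappa$, and the parity of each $\mathcal{Y}_\kappa$ (from the product of the $\pm 1$ endpoint cosines, which appears as an explicit sign in the corresponding factor of $F$). Applying the algorithm of the preceding paragraph---adapted to the linear rather than cyclic topology of each component---to each factor then recovers $\bell^{(\kappa)}$ modulo reversal and $\mathbf{c}^{(\kappa)}$ modulo global sign. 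The main obstacle throughout is the cyclic-order recovery step of case (a): turning the ``two vs.\ four cosine factor'' structural dichotomy into an algorithmic identification of cyclic-adjacent pairs without circularly presupposing the $c_j$'s. Both halves of admissibility enter essentially here---incommensurability makes pair-sum frequencies unambiguously identifiable in the spectrum, and non-speciality ensures that the arc-count signature encoded in $|\changes(\bzeta)|$ is never masked by accidental cancellations; I anticipate completing this step by induction on $n$ together with an explicit analysis of small-$n$ base cases.
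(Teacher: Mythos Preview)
Your approach is essentially the paper's: both extract the Bohr spectrum, recover $n$ from $\#\mathcal{T}=2^{n-1}$, recover the multiset of lengths by subset-sum peeling, and then exploit the two-versus-four cosine-factor dichotomy to detect adjacency. Your formula $c_j^2=\mathfrak{p}_{\{j-1,j\}}\mathfrak{p}_{\{j,j+1\}}/\mathfrak{p}_{\{j-1,j+1\}}$ is exactly the paper's matrix entry $D'_{j,k}=R'_{j,j}R'_{k,k}/R'_{j,k}$ specialised to a known-adjacent pair.

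The gap you flag at the end is not a real obstacle, and no induction is needed. The quotient you already wrote down can be computed for \emph{every} unordered pair $\{i,k\}$ of (sorted) side indices, before any ordering is known: if the pair is non-adjacent the four cosine factors cancel in pairs and the quotient equals $1$; if the pair is adjacent via a non-exceptional angle $\alpha_p$ the quotient equals $c_p^2<1$; if adjacent via an exceptional angle it equals $c_p^2=1$. Hence in case~(a) adjacency is detected directly by the test ``quotient $<1$'', and the cyclic order is obtained by walking the resulting $2$-regular adjacency graph on $\{1,\dots,n\}$. This is precisely what the paper does with its matrix $D'$. Once adjacency is known, the same quotient gives $|c_p|$ at each vertex, and the relative signs follow from $\sign(c_{j-1}c_j)=\sign R'_{j,j}$, as you indicate.

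Your treatment of case~(b) is less complete than the paper's. Invoking the factorisation of $F$ from \cite{LPPS} is correct in principle, but you do not explain how to extract the individual factors from $F$ alone, and your proposal to locate the exceptional vertices from the set $\{\bzeta:|\mathfrak{p}_{\bzeta}|=1\}$ fails when $K=1$ (that set is then just $\{(1,\dots,1)\}$, carrying no positional information). The paper instead reuses the same $D'$-matrix mechanism uniformly: rows of $D'$ with fewer than two off-diagonal entries $<1$ flag edges having an exceptional endpoint, and walking the $(<1)$-adjacencies from such a row traces out one exceptional component directly, with its length vector and interior cosines. The parity of each component is read from the sign of the product of the diagonal entries $R'_{j,j}$ along it. Your count $\#\{\bzeta:|\mathfrak{p}_{\bzeta}|=1\}=2^{K-1}$ is a valid alternative for determining $K$, but the paper's formula $K=n-\tfrac12\#\{(j,k):j\ne k,\ D'_{j,k}<1\}$ sits more naturally within the single $D'$-matrix framework that handles both cases at once.
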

It immediately  follows from Theorems \ref{thm:A} and \ref{thm:B} that all the geometric data in Theorem \ref{thm:B} can be reconstructed from the Steklov spectrum of an admissible  curvilinear polygon.

The proof of Theorem \ref{thm:B} is fully constructive, in a sense that all the operations required to extract the geometric data from the characteristic polynomial may be easily done ``by hand'' or implemented using symbolic computations, see subsection \ref{subsec:proofthmb}. By contrast, a numerical implementation of the algorithm of 
Theorem \ref{thm:A} may not be straightforward.

\begin{remark}
\label{rem:adm}
If either of the admissibility conditions  \eqref{eq:ellcond} or \eqref{eq:alphacond} is not satisfied, we can construct a number of examples of different curvilinear polygons with the same characteristic polynomial,
see  subsection \ref{subsec:examples}. The admissibility assumption is therefore necessary for the validity of Theorem \ref{thm:B}. Hence, in view of Theorem \ref{thm:A0}, Theorems \ref{thm:mainnoexc} and \ref{thm:mainexc} require this assumption as well.
\end{remark}
\section{Proofs of Theorems \ref{thm:A0} and  \ref{thm:A}}
\subsection{Some auxiliary facts}
We will use  the following results proved in \cite{LPPS}.
\begin{proposition}\label{prop:summary} Let $\mathcal{P}=\mathcal{P}(\balpha,\bell)$ be a curvilinear polygon, with $\balpha\in(0,\pi)^n$, $\bell\in\mathbb{R}_+^n$, and $L$ given by \eqref{eq:Ldef}. With the sequences $\Sigma_{\balpha,\bell}$ and $\Lambda_{\mathcal{P}}$ defined as above, we have
\begin{enumerate}
\item[{\normalfont(a)}] As $\sigma\to+\infty$,
\[
\#(\Sigma_{\balpha,\bell}\cap [0, \sigma))= \#(\Lambda_{\mathcal{P}}\cap [0, \sigma))+O(1)=\frac{L}{\pi} \sigma+O(1).
\]
\item[{\normalfont(b)}] There exists a constant $N=N_{\balpha,\bell}\in\mathbb{N}$ such that  for every interval $I\subset\mathbb{R}_+$ of length one
\[
\#(\Sigma_{\balpha,\bell}\cap I)\le N\qquad\text{and}\qquad\#(\Lambda_{\mathcal{P}}\cap I)\le N.
\]
\end{enumerate}
\end{proposition}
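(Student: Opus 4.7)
The plan is to handle $\Sigma_{\balpha,\bell}$ first and then transfer both estimates to $\Lambda_{\mathcal{P}}$ via the polynomial decay in \eqref{eq:quasibound}. For part (a) applied to $\Sigma_{\balpha,\bell}$, I would first observe that $F_{\balpha,\bell}(\sigma)$ extends to an entire function of $z\in\mathbb{C}$ of exponential type exactly $L$: within $\pmset{n}_+$ the largest frequency $|\bell\cdot\bzeta|$ equals $\ell_1+\cdots+\ell_n=L$, attained at $\bzeta=(1,\ldots,1)$ with coefficient $\mathfrak{p}_{(1,\ldots,1)}=1$ (since $\changes(1,\ldots,1)=\emptyset$ leaves an empty product). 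Two equivalent routes now yield the counting asymptotic. The cleaner one is to invoke the quantum graph interpretation from Remark \ref{quantum0}: the $\sigma_m^2$ are eigenvalues of a Laplacian on a circular metric graph of total length $L$, and the Weyl law for such quantum graphs is sharp with error $O(1)$, immediately giving $\#(\Sigma_{\balpha,\bell}\cap[0,\sigma))=L\sigma/\pi+O(1)$. Alternatively one can work directly with the entire function $F_{\balpha,\bell}(z)$ via its indicator diagram, a vertical segment of width $2L$, and apply classical results on real zeros of entire functions of exponential type that are bounded on $\mathbb{R}$. Passing from $\Sigma_{\balpha,\bell}$ to $\Lambda_{\mathcal{P}}$ is then immediate: $|\lambda_m-\sigma_m|=O(m^{-\epsilon})=O(1)$ by \eqref{eq:quasibound}, so $\lambda_m=\pi m/L+O(1)$ and hence $\#(\Lambda_{\mathcal{P}}\cap[0,\sigma))=L\sigma/\pi+O(1)$.

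For part (b), I would again treat $\Sigma_{\balpha,\bell}$ first. Since $F_{\balpha,\bell}(z)$ is entire of exponential type $L$ and $|F_{\balpha,\bell}|$ is uniformly bounded on $\mathbb{R}$ (every summand in \eqref{eq:Fdef} is a bounded cosine), standard complex-analytic arguments --- for instance Jensen's formula on a disk of radius $2$ centered at $x_0\in\mathbb{R}$, combined with a lower bound on $|F_{\balpha,\bell}(x_0+\ir)|$ coming from the dominant-frequency term $\cos(Lz)$ --- produce a uniform upper bound on the number of zeros in any unit real interval, yielding the constant $N_{\balpha,\bell}$ for $\Sigma_{\balpha,\bell}$. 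The bound for $\Lambda_{\mathcal{P}}$ then follows from \eqref{eq:quasibound}: choosing $M$ so that $|\lambda_m-\sigma_m|\le 1/2$ for all $m\ge M$, any unit interval meets $\{\lambda_m:m\ge M\}$ in no more points than its $1$-enlargement meets $\Sigma_{\balpha,\bell}$, and the finitely many low-lying Steklov eigenvalues are absorbed into the constant.

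The principal technical obstacle lies in part (a): securing the sharp $O(1)$ error in the counting function of $\Sigma_{\balpha,\bell}$, rather than the $o(\sigma)$ that a naive application of the Weyl law \eqref{eq:agranovich} would provide. Getting the error down to $O(1)$ requires exploiting the rigid exponential-polynomial structure of $F_{\balpha,\bell}$ --- in particular the presence of a \emph{unique} dominant frequency $L$ with coefficient exactly $1$, which pins down the indicator diagram of its entire extension. A careful lower bound on $|F_{\balpha,\bell}(x+\ir)|$ as $x$ ranges over $\mathbb{R}$, based on the fact that $\cos(Lz)$ strictly dominates all subleading contributions off the real axis, is the central technical input; this is the point at which the detailed combinatorial machinery of \cite{LPPS} (or a careful invocation of the classical theory of entire functions of exponential type) enters essentially.
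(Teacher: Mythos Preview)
Your approach to part (a) --- invoking the quantum-graph Weyl law with $O(1)$ remainder for $\Sigma_{\balpha,\bell}$ and then transferring to $\Lambda_{\mathcal{P}}$ via \eqref{eq:quasibound} --- is essentially what the paper does: it simply cites \cite[formula (2.31)]{LPPS} and \cite[Proposition 2.30]{LPPS}, the former resting on the quantum-graph analogy and \cite[Lemma 3.7.4]{BeKu13}. The paper quotes a direct result from \cite{LPPS} for the $\Lambda_{\mathcal{P}}$ asymptotic rather than transferring, but your transfer argument is equally valid. Your alternative entire-function/indicator-diagram route would also work but is not needed once the quantum-graph citation is available; in particular, the ``principal technical obstacle'' you flag is dispatched in the paper by a one-line reference rather than by any new combinatorics.

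Where you diverge more substantially is part (b). The paper observes that (b) \emph{follows immediately from (a)}: once you know $\#(\Sigma_{\balpha,\bell}\cap[0,\sigma))=\tfrac{L}{\pi}\sigma+O(1)$ with a uniform implied constant, subtracting the counting function at the two endpoints of any unit interval gives a uniform bound $\tfrac{L}{\pi}+O(1)$ on the number of points in that interval, and the same reasoning applies to $\Lambda_{\mathcal{P}}$. Your Jensen-formula argument is correct and self-contained, but it is considerably more work than necessary here; the advantage of your route is that it would establish (b) independently of (a), while the paper's route is a two-line deduction once (a) is in hand.
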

Part (a) is just a one-term Weyl's asymptotic formula for the Steklov quasi-eigenvalues and eigenvalues, respectively,  which are taken from  \cite[formula (2.31)]{LPPS}  and \cite[Proposition 2.30]{LPPS}  (the former in turn follows from  \cite[Lemma 3.7.4]{BeKu13} and the quantum graph analogy \cite[Theorem 2.24]{LPPS}).  Part (b)  immediately follows from part (a).  

The polynomial \eqref{eq:Fdef} can be equivalently re-written as
\begin{equation}\label{eq:Fdef1}
F_{\balpha,\bell}(\sigma)=\sum_{\bzeta\in\pmset{n}} \frac{1}{2}\mathfrak{p}_{\bzeta}\cos(|\bell\cdot\bzeta|\sigma)-\prod_{j=1}^n\sin\frac{\pi^2}{2\alpha_j}.
\end{equation}
We note that
\[
\mathfrak{p}_{\bzeta}=\mathfrak{p}_{-\bzeta}.
\]
We additionally set
\begin{equation}\label{eq:Tdef}
\mathcal{T}=\mathcal{T}_n=\mathcal{T}_{n,\bell}:=\{|\bell\cdot\bzeta|: \bzeta\in\pmset{n}_+\}.
\end{equation}
Then the set $\mathcal{T}$ has at most $2^{n-1}$ distinct elements $t_1,\dots,t_{\# \mathcal{T}}$ and \eqref{eq:Fdef}  can be also re-written as
\begin{equation}\label{eq:Fdef2}
F_{\balpha,\bell}(\sigma)=\sum_{k=1}^{\# \mathcal{T}}  r_k \cos(t_k \sigma) - r_0,
\end{equation}
where the coefficients $r_k$, $k=0,1,\dots,\# \mathcal{T}$, depend non-trivially on $\balpha$; if for some $k\ge 1$ we have $t_k=0\in\mathcal{T}$, then the corresponding term $r_k$ is incorporated in $r_0$.
We can also further re-write  \eqref{eq:Fdef2} as
\begin{equation}\label{eq:Fdef3}
F_{\balpha,\bell}(\sigma)=\sum_{k=1}^{\# \mathcal{T}}  \frac{r_k}{2} \left(\er^{-i t_k \sigma}+\er^{i t_k \sigma}\right) - r_0.
\end{equation}

\subsection{Infinite product formula} 
\label{subsec:infprod}
Our first objective is to prove the following result.
\begin{theorem}
\label{prop:Aprime}
Given the collection of quasi-eigenvalues $\Sigma_{\balpha,\bell}=\{\sigma_m\}$ of a curvilinear polygon $\mathcal{P}(\balpha,\bell)$, we can recover the corresponding characteristic polynomial $F_{\balpha,\bell}$ uniquely.
\end{theorem}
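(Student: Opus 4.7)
The function $F = F_{\balpha,\bell}$ extends, via \eqref{eq:Fdef3}, to an even entire function of $\sigma \in \mathbb{C}$ of exponential type $L = L_\bell$. The maximal frequency in \eqref{eq:Fdef2} is attained only at $\bzeta = (1,\dots,1) \in \pmset{n}_+$ (since $\ell_j > 0$), with coefficient $\mathfrak{p}_{(1,\dots,1)} = 1$, as $\changes((1,\dots,1)) = \varnothing$. Consequently the coefficient of $\er^{\pm iL\sigma}$ in $F$ equals $\tfrac12$, a normalization we will exploit below. The zero set of $F$ is symmetric in $\sigma$, contains $0$ with multiplicity $2m_0$, and otherwise consists of $\{\pm\sigma_m : \sigma_m \in \Sigma_{\balpha,\bell},\ \sigma_m > 0\}$ with multiplicities; it is thus read off directly from $\Sigma_{\balpha,\bell}$.

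The plan is to apply the Hadamard--Weierstrass factorization theorem \cite{Con} to $F$, which has order $\rho = 1$. It yields
\[
F(\sigma) = \sigma^{2m_0}\, \er^{a\sigma + b} \prod_m E_1(\sigma/\sigma_m)\, E_1(-\sigma/\sigma_m),
\]
where $E_1(z) = (1-z)\er^z$ and the product is taken over the strictly positive quasi-eigenvalues, with multiplicity. Pairing $\pm\sigma_m$ collapses the exponential convergence factors via $E_1(z)\,E_1(-z) = 1 - z^2$, while the evenness $F(-\sigma) = F(\sigma)$ forces $a = 0$. Convergence of $\prod_m(1 - \sigma^2/\sigma_m^2)$ is immediate from $\sum 1/\sigma_m^2 < \infty$, itself a consequence of the Weyl-type asymptotic $\sigma_m \sim \pi m / L$ given by Proposition~\ref{prop:summary}(a), which also allows us to recover $L$ from $\Sigma_{\balpha,\bell}$. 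We thus arrive at
\[
F(\sigma) = C \cdot \sigma^{2m_0} \prod_m\left(1 - \frac{\sigma^2}{\sigma_m^2}\right)
\]
for some nonzero constant $C = \er^b$.

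It remains to show that $C$, too, is determined by $\Sigma_{\balpha,\bell}$. The cleanest route is via a uniqueness argument: if $F_1$ and $F_2$ are two characteristic polynomials sharing the same quasi-eigenvalue multiset, the factorization above presents both as the same canonical product times (possibly different) constants, so the ratio $F_1/F_2$ is constant. Since the coefficient of $\er^{iL\sigma}$ equals $\tfrac12$ in both (with the extreme frequency $L$ recoverable from $\Sigma_{\balpha,\bell}$ via Weyl's law), that constant must equal $1$, whence $F_1 = F_2$. Equivalently, $C$ can be pinned down directly by comparing the asymptotic $F(iy) \sim \tfrac12\,\er^{Ly}$ as $y \to +\infty$ (from the explicit formula, using that the next largest frequency is strictly below $L$) with the growth of the canonical product on the imaginary axis.

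The principal technical obstacle is this last identification: verifying an asymptotic of the form $y^{2m_0}\prod_m(1 + y^2/\sigma_m^2) \sim K\, \er^{Ly}$ as $y\to +\infty$, with $K > 0$ depending only on $\Sigma_{\balpha,\bell}$, which requires standard growth theorems for canonical products of exponential type controlled via the sharp remainder in Proposition~\ref{prop:summary}(a). The uniqueness reformulation outlined above sidesteps any explicit computation of $C$ and is the preferred route in a write-up.
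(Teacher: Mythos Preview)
Your argument is correct and shares the paper's core: Hadamard--Weierstrass factorisation gives $F_{\balpha,\bell}=C\,Q_\Sigma$ with $Q_\Sigma(\sigma)=\sigma^{2m_0}\prod_m(1-\sigma^2/\sigma_m^2)$, and the constant $C$ is then fixed by the built-in normalisation $\mathfrak{p}_{(1,\dots,1)}=1$ at the top frequency $L$, which is itself read off from $\Sigma$ via the Weyl law of Proposition~\ref{prop:summary}(a).

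The difference lies in how that last step is packaged. The paper introduces the Bohr--Fourier mean $\mathcal{A}[f](z)=\lim_{t\to\infty}\frac{1}{t}\int_0^t \er^{-\ir sz}f(s)\,\dr s$, applies it to $Q_\Sigma$ to extract \emph{all} frequencies and amplitudes of $F$, and sets $C=\bigl(2\,\mathcal{A}[Q_\Sigma](\max\mathcal{T})\bigr)^{-1}$. Your route---two candidate characteristic polynomials with the same $\Sigma$ have constant ratio, and matching the coefficient of $\cos(L\sigma)$ forces that ratio to be $1$---is more direct for the bare uniqueness claim and sidesteps any almost-periodic machinery. What the paper's formulation buys is explicit constructiveness: the same mean-operator computation, with $Q_\Sigma$ replaced by $Q_\Lambda$ and the observation that $\mathcal{A}$ kills $o(1)$ terms, is reused verbatim to prove the harder Theorem~\ref{thm:A}, where only the Steklov eigenvalues (not the quasi-eigenvalues) are given. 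Your imaginary-axis alternative would not port to that setting, and your uniqueness reformulation, while perfectly adequate here, would need to be rewritten there.
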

We start with the following easy corollary of the Hadamard--Weierstrass factorisation Theorem. We recall that for an entire function $f:\mathbb{C}\to\mathbb{C}$, its \emph{order} $\rho$ 
is defined as 
\[
\rho:=\inf\left\{r\in\mathbb{R}: f(z)=O\left(\er^{|z|^r}\right)\text{ as }|z|\to\infty\right\}.
\]

\begin{theorem}\label{thm:infiniteproduct1}  Let $f:\mathbb{C}\to\mathbb{C}$ be an even entire function of order one with a  zero of order $2m_0$ at $z=0$, and non-zero zeros $\pm \gamma_j$  repeated with multiplicities; denote by $\Gamma$ the sequence (with multiplicities) consisting of $m_0$ zeros and $\gamma_j$. Then there exists a constant $C$ such that
\[
f(z)=CQ_{\Gamma}(z),
\]
where
\[
Q_{\Gamma}(z):=z^{2m_0}\prod_{\gamma_j\in\Gamma\setminus\{0\}}\left(1-\frac{z^2}{\gamma_j^2}\right),
\]
\end{theorem}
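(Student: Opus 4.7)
The plan is to invoke the Hadamard--Weierstrass factorisation theorem directly and then exploit the symmetry of $f$ to eliminate the exponential factor.

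First I would apply the Hadamard factorisation theorem to the entire function $f$ of order $\rho = 1$. The genus is then at most $1$, so there exist a polynomial $g(z) = a z + b$ of degree at most $1$ and a sequence of elementary Weierstrass factors $E_1(w) = (1-w)\er^{w}$ such that
\[
f(z) = z^{2m_0}\,\er^{g(z)}\prod_{\gamma_j \in \Gamma \setminus \{0\}} E_1\!\left(\frac{z}{\gamma_j}\right),
\]
where the zeros $\gamma_j$ are listed with multiplicities. Because the non-zero zeros of $f$ come in symmetric pairs $\pm \gamma_j$ (of equal multiplicity, as $f$ is even), I would group the factor corresponding to $\gamma_j$ with the one corresponding to $-\gamma_j$ and use the identity
\[
E_1\!\left(\frac{z}{\gamma_j}\right) E_1\!\left(\frac{-z}{\gamma_j}\right) = \left(1 - \frac{z}{\gamma_j}\right)\!\left(1 + \frac{z}{\gamma_j}\right)\er^{z/\gamma_j}\er^{-z/\gamma_j} = 1 - \frac{z^2}{\gamma_j^2},
\]
so that the whole Weierstrass product collapses to $Q_{\Gamma}(z)/z^{2m_0}$. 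This gives $f(z) = \er^{g(z)}\, Q_{\Gamma}(z)$.

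Second, I would use the evenness of $f$ together with the evenness of $Q_{\Gamma}$: the relation $f(-z) = f(z)$ forces $\er^{g(-z)} = \er^{g(z)}$, i.e.\ $\er^{-2az} \equiv 1$, hence $a = 0$. Setting $C = \er^{b}$ yields $f(z) = C\, Q_{\Gamma}(z)$, as required.

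I expect the only subtle point to be the implicit claim that the paired product $Q_{\Gamma}(z)$ actually converges without correction factors; this is not a problem because $f$ has order $1$, so the zeros satisfy $\sum |\gamma_j|^{-1-\varepsilon} < \infty$ for every $\varepsilon > 0$, and in any case the convergence of $\prod_j (1 - z^2/\gamma_j^2)$ follows formally from the convergence of the Hadamard product once the pairing has been carried out. Aside from this bookkeeping, the argument is essentially a direct appeal to the factorisation theorem and a one-line symmetry argument.
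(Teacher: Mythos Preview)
Your proposal is correct and follows essentially the same route as the paper: apply Hadamard--Weierstrass, pair each $E_1(z/\gamma_j)$ with $E_1(-z/\gamma_j)$ to get $1-z^2/\gamma_j^2$, and use evenness to force the linear polynomial $g$ to be constant. Your argument that $\er^{-2az}\equiv 1$ forces $a=0$ is in fact slightly more explicit than the paper's one-line ``since $f$ is even, so is $g$.''
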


\begin{proof} By the Hadamard--Weierstrass factorisation Theorem \cite{Con} applied to $f$, we obtain
\[
f(z)=z^{2m_0}\er^{g(z)}  \prod_{\gamma_j\in\Gamma\setminus\{0\}} E_1\left(\frac{z}{\gamma_j}\right)E_1\left(-\frac{z}{\gamma_j}\right),
\]
where $g(z)$ is a polynomial of degree less than or equal to one, and the \emph{primary} (or \emph{elementary}) \emph{factors} $E_1(w)$ are defined by 
\[
E_1(w):=(1-w)\er^w.
\]
We note that $E_1(w)E_1(-w)=(1-w^2)$, and that, since $f(z)$ is even, so should be $g(z)$. As $g(z)$ is also linear, it is therefore a constant. The result follows immediately.
\end{proof}

Let now $\balpha,\bell$ be arbitrary, and let $F_{\balpha,\bell}\in\mathcal{F}$. Theorem \ref{thm:infiniteproduct1} immediately implies
\begin{theorem}\label{thm:infiniteproduct} There exists a constant $C=C_{\balpha,\bell}$ such that $F_{\balpha,\bell}(\sigma)=CQ_{\Sigma}(\sigma)$, where
\begin{equation}\label{eq:defofq}
Q_{\Sigma}(\sigma):=\sigma^{2m_0}\prod_{\sigma_j\in\Sigma_{\balpha,\bell}\setminus\{0\}}\left(1-\frac{\sigma^2}{\sigma_j^2}\right),
\end{equation}
with $m_0$ being the multiplicity of zero in $\Sigma_{\balpha,\bell}$.
\end{theorem}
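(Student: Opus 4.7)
The plan is to verify the hypotheses of Theorem \ref{thm:infiniteproduct1} for the characteristic polynomial $F_{\balpha,\bell}$ and then read off the desired factorisation directly. The previous theorem does essentially all of the analytic work, so the present theorem should follow at once.

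First I would check the analytic hypotheses. The function $F_{\balpha,\bell}$ is a finite linear combination of cosines with real frequencies, so it extends to an entire function of $\sigma\in\mathbb{C}$, and it is manifestly even in $\sigma$. Setting $T:=\max_{1\le k\le\#\mathcal{T}} t_k$, the exponential representation \eqref{eq:Fdef3} gives
\[
|F_{\balpha,\bell}(\sigma)|\le\left(r_0+\sum_{k=1}^{\#\mathcal{T}}|r_k|\right)\er^{T|\sigma|}\qquad\text{for all }\sigma\in\mathbb{C},
\]
so $F_{\balpha,\bell}$ is of exponential type $T$ and, in particular, of order at most one. The origin is a zero of order $2m_0$ by the very definition of $m_0$.

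The only subtlety lies in matching the non-zero zeros. Since $F_{\balpha,\bell}$ is even, its non-zero complex zeros form pairs $\pm\gamma$, and by construction every positive real zero is included in $\Sigma_{\balpha,\bell}$ with its correct multiplicity. I therefore need to rule out non-real zeros --- this is the one step where a piece of input external to the present section is required. I would invoke the quantum graph interpretation mentioned in Remark \ref{quantum0}: up to a non-zero constant, $F_{\balpha,\bell}$ coincides with the secular function of the self-adjoint Laplacian $\mathcal{G}_{\balpha,\bell}$, whose spectrum is contained in $[0,\infty)$, and the $\sigma_m$ are precisely the square roots of its eigenvalues. Hence every non-zero zero of $F_{\balpha,\bell}$ lies in $\pm\Sigma_{\balpha,\bell}$ with the stated multiplicity.

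With all hypotheses of Theorem \ref{thm:infiniteproduct1} now verified, I would apply it to $f=F_{\balpha,\bell}$ with $\Gamma=\Sigma_{\balpha,\bell}$, obtaining $F_{\balpha,\bell}(\sigma)=C\,Q_{\Sigma}(\sigma)$ for some constant $C=C_{\balpha,\bell}$. If desired, the constant can be read off by evaluating both sides at any point that is not a quasi-eigenvalue, or equivalently by comparing the leading Taylor coefficients at the origin. The whole argument is routine modulo the reality-of-zeros input, which is the only non-trivial ingredient and the step I would flag as the main obstacle.
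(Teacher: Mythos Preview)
Your proposal is correct and follows the same route as the paper: the paper simply states that Theorem~\ref{thm:infiniteproduct1} ``immediately implies'' Theorem~\ref{thm:infiniteproduct}, and you have spelled out precisely why. Your explicit verification that $F_{\balpha,\bell}$ is even, entire, and of order (at most) one is routine and matches what the paper leaves implicit; the one substantive point you raise --- that all zeros of $F_{\balpha,\bell}$ must be real for the identification $\Gamma=\Sigma_{\balpha,\bell}$ to hold --- is indeed required, and the paper tacitly relies on the quantum-graph interpretation from \cite{LPPS} (Remark~\ref{quantum0}) for this, exactly as you suggest.
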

\subsection{Recovering a trigonometric polynomial from an infinite product}\label{subs:recovery}
Consider the \emph{mean}  operator $\bf{M}$ defined on the space of almost periodic functions on $\mathbb{R}$ by the formula
\[
{\bf M}[f]:=\lim_{t\to\infty}\frac 1t\int_0^t f(s)\, \dr s,
\]
and consider additionally the function
\[
(\mathcal{A}[f])(z):={\bf M}\left[\er^{-\ir s z}f(s)\right]
\]
whose support determines the set of frequencies of $f$ \cite{Be}.
Note that we are not dealing with any continuity or boundedness of $\mathcal{A}$ so we do not need to specify a norm. It is, however, evident that  $\mathcal{A}[f]$ is linear in $f$. Furthermore, for a constant $q\in\mathbb{R}$, by direct computation,
\[
\mathcal{A}[\er^{\ir q s}](z)=\lim_{t\to\infty}\frac{1}{t}\int_0^t \er^{\ir s (q-z)}\, \dr s=\begin{cases} 0, &\text{if } z\neq q;\\ 1, &\text{if }  z=q.\end{cases}
\]
Also, by an easy argument,
\begin{equation}\label{eq:Aofosmall}
\mathcal{A}[f](z)=0\qquad\text{whenever a function $f$ is $o(1)$ at $+\infty$}.
\end{equation}

But now recall from \eqref{eq:Fdef3} and  Theorem \ref{thm:infiniteproduct}  that
\[
F_{\balpha,\bell}(\sigma)=\sum_{k=1}^{\# \mathcal{T}}  \frac{r_k}{2} \left(\er^{-i t_k \sigma}+\er^{i t_k \sigma}\right) - r_0=CQ_{\Sigma}(\sigma)
\]
with some constant $C$. Thus, the set of frequencies $\mathcal{T}$ of $F$ can be recovered via
\[
\mathcal{T}=\left\{z\ge 0: \mathcal{A}[Q](z)\ne 0\right\}.
\]
The coefficients $r_j$ are then recovered via
\[
r_j=2C\mathcal{A}[Q](t_j)\quad\text{for }0\ne t_j\in\mathcal{T};\qquad r_0=-C\mathcal{A}[Q](0),
\]
and the unknown constant $C$ can be found from the condition that the coefficient $r_k$ corresponding to the maximal element of $\mathcal{T}$ should be equal to one:
\[
C=\frac{1}{2 \mathcal{A}[Q](\max\mathcal{T})}.
\]
This proves Theorem  \ref{prop:Aprime}.

\begin{proof}[Proof of Theorem \ref{thm:A0}] As mentioned in the Introduction, the ``if" part follows directly from \eqref{eq:quasibound}, and it remains to prove the ``only if'' part. Consider two Steklov quasi-isospectral curvilinear polygons $\mathcal{P}$ and $\tilde{\mathcal{P}}$, with $\Sigma$ and $\tilde{\Sigma}$ being their corresponding sets of quasi-eigenvalues. By Theorem \ref{thm:maindirect}, these sets of quasi-eigenvalues differ by $o(1)$ at infinity. Moreover, each set of quasi-eigenvalues is a set of 
zeros of some characteristic polynomial of the form \eqref{eq:Fdef} which is an almost periodic function with all real roots. Therefore, by \cite[Theorem 6]{KurSuhr}, which implies that in this case  two almost periodic functions with asymptotically close zeros have exactly the same zeros, we  have $\Sigma=\tilde{\Sigma}$. An application of Theorem \ref{prop:Aprime}  completes  the proof. 
\end{proof}
\subsection{Another infinite product}
\label{subsec:AA}
In this section we have a sequence  $\Lambda=\{\lambda_m\}$ for which $\lambda_m=\sigma_m+O(m^{-\epsilon})$ for some (unknown) sequence $\Sigma=\{\sigma_m\}$ of roots of an unknown trigonometric polynomial $F\in\mathcal{F}$ with some unknown $\epsilon>0$. We will explain how to recover $F(\sigma)$ from this information.

The key idea of this proof is that, motivated by \eqref{eq:defofq}, we may define a similar ``infinite product'' with $\sigma_m$ replaced by $\lambda_m$. Suppose that $n_0$ elements of $\Lambda$ are equal to zero. (In fact, in the inverse Steklov problem, since $\Lambda$ is the sequence of actual eigenvalues, we always have $n_0=1$.) Then set
\begin{equation}\label{eq:defofr}
Q_\Lambda(\sigma):=\sigma^{2n_0}\prod_{m=n_0+1}^{\infty}\left(1-\frac{\sigma^2}{\lambda_m^2}\right).
\end{equation}

Consider the following ratio,  which we for the moment  compute formally, after some simplifications, as
\[
\frac{Q_{\Lambda}(\sigma)}{Q_{\Sigma}(\sigma)}
=\frac{\sigma^{2n_0}\prod\limits_{m=n_0+1}^{\infty}\left(1-\frac{\sigma^2}{\lambda_m^2}\right)}{\sigma^{2m_0}\prod\limits_{m=m_0+1}^{\infty}\left(1-\frac{\sigma^2}{\sigma_m^2}\right)}
=\frac{\prod\limits_{m=n_0+1}^\infty\left(\frac{1}{\sigma^2}-\frac{1}{\lambda_m^2}\right)}{\prod\limits_{m=m_0+1}^\infty\left(\frac{1}{\sigma^2}-\frac{1}{\sigma_m^2}\right)}.
\]
In the purely formal sense, as $\sigma\to\infty$, this ratio tends to the constant
\begin{equation}\label{eq:C0defn}
C_0=C_{0;\Sigma,\Lambda}:=
\left\{
\begin{alignedat}{2}
&\prod\limits_{m=m_0+1}^{\infty}\frac{\sigma_m^2}{\lambda_m^2}&\qquad&\text{if }n_0=m_0,\\
(-1)^{n_0-m_0}\prod\limits_{m=m_0+1}^{n_0} \sigma_m^2&\prod\limits_{m=n_0+1}^{\infty}\frac{\sigma_m^2}{\lambda_m^2}&\qquad&\text{if }n_0>m_0,\\
(-1)^{m_0-n_0}\prod\limits_{m=n_0+1}^{m_0} \lambda_m^{-2}&\prod\limits_{m=m_0+1}^{\infty}\frac{\sigma_m^2}{\lambda_m^2}&\qquad&\text{if }n_0<m_0.
\end{alignedat}
\right.
\end{equation}
Note that in all the cases the infinite products in \eqref{eq:C0defn} are well-defined in view of \eqref{eq:quasibound} and Proposition \ref{prop:summary}(a).

The following result makes this formal calculation rigorous and also handles the singularities near the zeros.
\begin{theorem}\label{lem:goestozero} With terminology as above,
\[
\lim_{\sigma\to\infty} (Q_{\Lambda}(\sigma)-C_0Q_{\Sigma}(\sigma))=0.
\]
\end{theorem}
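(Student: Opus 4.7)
The plan is to make the formal calculation preceding \eqref{eq:C0defn} rigorous by a telescoping identity together with a careful bookkeeping of partial products, exploiting the polynomial decay $\lambda_m - \sigma_m = O(m^{-\epsilon})$.

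First I would address the generic case $n_0 = m_0$ (the two other branches of \eqref{eq:C0defn} follow by the same method after adjustment by finitely many factors). Unfolding the definitions and placing everything over a common denominator yields
\[
Q_\Lambda(\sigma) - C_0 Q_\Sigma(\sigma) = \sigma^{2m_0}\left(\prod_{m > m_0} a_m(\sigma) - \prod_{m > m_0} b_m(\sigma)\right),
\]
where $a_m(\sigma) := (\lambda_m^2 - \sigma^2)/\lambda_m^2$ and $b_m(\sigma) := (\sigma_m^2 - \sigma^2)/\lambda_m^2$. Both products converge absolutely by Proposition \ref{prop:summary}(a) and \eqref{eq:quasibound}. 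Applying the standard telescoping identity
\[
\prod_m a_m - \prod_m b_m = \sum_k (a_k - b_k)\prod_{m < k} a_m \prod_{m > k} b_m,
\]
and using the decay
\[
a_k - b_k = \frac{(\lambda_k - \sigma_k)(\lambda_k + \sigma_k)}{\lambda_k^2} = O(k^{-1-\epsilon})
\]
(from \eqref{eq:quasibound} and Proposition \ref{prop:summary}(a)), the problem reduces to bounding the partial products $\prod_{m < k} a_m$ and the tails $\prod_{m > k} b_m$ uniformly in $\sigma$.

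The control of these products would come from the fact that $Q_\Sigma(\sigma) = F(\sigma)/C$ is bounded on the real line, since $F$ is a bounded sum of cosines by \eqref{eq:Fdef}. This boundedness transfers to the tails by factoring $Q_\Sigma$ into a finite partial product times a tail, and to the partial products of the $a_m$'s via $|\lambda_m - \sigma_m| = O(m^{-\epsilon})$. After splitting the telescoping sum into regimes $k \lesssim \sigma^{1/2}$, $k \sim L\sigma/\pi$ (bulk), and $k \gg \sigma$ (far tail), each range should contribute $o(1)$ as $\sigma \to +\infty$.

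The main obstacle will be the bulk range, where $\sigma_k$ is comparable to $\sigma$ and individual factors in the partial products vanish or change sign. Proposition \ref{prop:summary}(b), which bounds the number of $\sigma_k$ in any unit interval, should allow us to isolate the small number of near-resonant terms and handle them separately, leaving the remaining partial products to be controlled by the boundedness of $Q_\Sigma$.
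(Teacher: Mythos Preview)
Your proposal correctly isolates the difficulty but leaves unfilled precisely the step that carries all the weight. The assertion that ``boundedness transfers to the tails by factoring $Q_\Sigma$ into a finite partial product times a tail'' is where the argument breaks: writing the mixed partial product in your telescoping sum as
\[
\sigma^{2m_0}\prod_{m<k}a_m\prod_{m>k}b_m \;=\; \frac{C_0 Q_\Sigma(\sigma)}{b_k}\prod_{m_0<m<k}\frac{a_m}{b_m}
\;=\; C_0 Q_\Sigma(\sigma)\cdot\frac{\lambda_k^2}{\sigma_k^2-\sigma^2}\prod_{m_0<m<k}\frac{\lambda_m^2-\sigma^2}{\sigma_m^2-\sigma^2},
\]
you see that boundedness of $Q_\Sigma$ alone gives nothing: whenever $k$ (or any $m<k$) falls into the window $|\sigma_m-\sigma|\le 1$, the corresponding denominator can be arbitrarily small. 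What you actually need is a uniform bound on
\[
\frac{Q_\Sigma(\sigma)}{\prod_{m\in\mathcal{M}_\Sigma(\sigma)}(\sigma_m-\sigma)},
\]
and this does not follow from Proposition~\ref{prop:summary}(b) alone, which only tells you $\#\mathcal{M}_\Sigma(\sigma)\le N$. Knowing there are at most $N$ bad factors does not tell you why dividing them out stays bounded.

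The paper supplies the missing ingredient, and it is not a soft one: since $Q_\Sigma=F/C$ is a trigonometric polynomial, it has \emph{uniformly bounded derivatives of every order}, and an inductive integration-by-parts identity (the paper's Proposition~\ref{prop:analysisfact} and Corollary~\ref{cor:analysisfact}) shows that dividing an entire function by up to $N$ of its zeros in a unit interval is controlled by its $C^N$ norm there. This is exactly what makes the quantity above---called $P_1(\sigma)$ in the paper---uniformly bounded. Without this, your dominated-convergence argument does not close: the summability of $a_k-b_k=O(k^{-1-\epsilon})$ provides a would-be dominating sequence only after the mixed partial products are shown to be bounded uniformly in both $k$ and $\sigma$, and that is precisely what is unproved.

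Structurally, the paper avoids the telescoping altogether: it inserts an intermediate function $\tilde{Q}_{\Sigma,\Lambda}$ obtained from $Q_\Sigma$ by swapping the at most $N$ near-resonant roots $\sigma_m$ for the corresponding $\lambda_m$, then proves separately a ``near'' estimate $\tilde{Q}_{\Sigma,\Lambda}-Q_\Sigma\to 0$ (via the $C^N$ bound just described) and a ``far'' estimate $Q_\Lambda-C_0\tilde{Q}_{\Sigma,\Lambda}\to 0$ (by taking logarithms and comparing the resulting sum to an explicit integral of $x^{-\epsilon}|x-\sigma|^{-1}$). Your telescoping route could in principle be pushed through, but it would require the same $C^N$ input and would entangle the near and far estimates inside every term of the sum, so the paper's two-step decomposition is both cleaner and not obviously harder.
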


\begin{proof} Set
\begin{equation}\label{eq:MSigma}
\mathcal{M}_\Sigma(\sigma):=\left\{m\in\mathbb{N}: \sigma_m\in\Sigma, |\sigma-\sigma_m|\leq 1\right\}.
\end{equation}
By Proposition \ref{prop:summary}(b), there exists a constant $N\in\mathbb{N}$ such that for any $\sigma\ge 0$,
\[
\#\mathcal{M}_\Sigma(\sigma)\le N.
\]

We define a new function,
\[
\tilde{Q}_{\Sigma,\Lambda}(\sigma):=
Q_{\Sigma}(\sigma)\prod_{m\in\mathcal{M}_\Sigma(\sigma)}\frac{\lambda_m^2-\sigma^2}{\sigma_m^2-\sigma^2}=
Q_{\Sigma}(\sigma)\prod_{m\in\mathcal{M}_\Sigma(\sigma)}\frac{\lambda_m^2}{\sigma_m^2}\frac{\left(1-\frac{\sigma^2}{\lambda_m^2}\right)}{\left(1-\frac{\sigma^2}{\sigma_m^2}\right)}.
\]
This is, essentially, $Q_\Sigma(\sigma)$ but with the zeros near each fixed $\sigma$ moved to be the zeros of $Q_{\Lambda}(\sigma)$ instead. Observe that the product factor appearing in this definition has at most $N$ terms, and the whole expression can be also re-written, using  \eqref{eq:defofq}, as
\begin{equation}\label{eq:Qtildedef1}
\tilde{Q}_{\Sigma,\Lambda}(\sigma)=\sigma^{2n_0} \prod_{m\in\mathcal{M}_\Sigma(\sigma)}\frac{\lambda_m^2}{\sigma_m^2}\left(1-\frac{\sigma^2}{\lambda_m^2}\right)\prod_{m\not\in\mathcal{M}_\Sigma(\sigma)}\left(1-\frac{\sigma^2}{\sigma_m^2}\right).
\end{equation}

Then we claim that
\begin{equation}\label{eq:near}
\lim_{\sigma\to\infty} (\tilde{Q}_{\Sigma,\Lambda}(\sigma)-Q_{\Sigma}(\sigma))=0
\end{equation}
and
\begin{equation}\label{eq:far}
\lim_{\sigma\to\infty} (Q_{\Lambda}(\sigma)-C_0\tilde{Q}_{\Sigma,\Lambda}(\sigma))=0,
\end{equation}
from which the Theorem follows.

An observation that will be useful in the proofs of \eqref{eq:near} and \eqref{eq:far} is that since $Q_\Sigma(\sigma)$ is a multiple of $F(\sigma)$, it is uniformly bounded together with all the derivatives.

To prove \eqref{eq:near} we write
\begin{equation}\label{eq:near2}
\begin{split}
\tilde{Q}_{\Sigma,\Lambda}(\sigma)-Q_{\Sigma}(\sigma)&=-Q_{\Sigma}(\sigma)\left(1-\prod_{m\in\mathcal{M}_\Sigma(\sigma)}\frac{\lambda_m^2-\sigma^2}{\sigma_m^2-\sigma^2}\right)\\
&=-\frac{Q_\Sigma(\sigma)}{\prod\limits_{m\in\mathcal{M}_\Sigma(\sigma)}(\sigma_m-\sigma)}\cdot\frac{\prod\limits_{m\in\mathcal{M}_\Sigma(\sigma)}(\sigma_m^2-\sigma^2)-\prod\limits_{m\in\mathcal{M}_\Sigma(\sigma)}(\lambda_m^2-\sigma^2)}{\prod\limits_{m\in\mathcal{M}_\Sigma(\sigma)}(\sigma_m+\sigma)}\\
&=-P_1(\sigma)P_2(\sigma),
\end{split}
\end{equation}
where
\begin{align}
P_1(\sigma)&:=\frac{Q_\Sigma(\sigma)}{\prod\limits_{m\in\mathcal{M}_\Sigma(\sigma)}(\sigma_m-\sigma)},\\
\begin{split}
P_2(\sigma)&:=\frac{\prod\limits_{m\in\mathcal{M}_\Sigma(\sigma)}(\sigma_m^2-\sigma^2)-\prod\limits_{m\in\mathcal{M}_\Sigma(\sigma)}(\lambda_m^2-\sigma^2)}{\prod\limits_{m\in\mathcal{M}_\Sigma(\sigma)}(\sigma_m+\sigma)}\\
&=\prod_{m\in\mathcal{M}_\Sigma(\sigma)}(\sigma_m-\sigma) - \prod_{m\in\mathcal{M}_\Sigma(\sigma)}(\lambda_m-\sigma)\frac{\lambda_m+\sigma}{\sigma_m+\sigma}.\label{eq:P2}
\end{split}
\end{align}
We claim that $P_1(\sigma)$  is uniformly bounded and that $P_2(\sigma)$ tends to zero as $\sigma$ tends to infinity; this is enough  to establish \eqref{eq:near}.

To examine $P_1(\sigma)$ we note the following analysis fact:
\begin{proposition}\label{prop:analysisfact} 
For any function $f(x)$ on an interval $[a,b]$ which is $C^{k+1}$ and which is zero at $x_0\in[a,b]$,
\begin{equation}\label{eq:intid}
\left(\frac{f(x)}{x-x_0}\right)^{(k)}=\frac{\mathcal{I}_k(x)}{(x-x_0)^{k+1}},
\end{equation}
where
\[
\mathcal{I}_k(x):=\int\limits_{x_0}^x (t-x_0)^k f^{(k+1)}(t)\, \dr t.
\]
\end{proposition}

\begin{proof}[Proof of Proposition \ref{prop:analysisfact}]  We first note that
\begin{equation}\label{eq:Ikprime}
\mathcal{I}'_k(x)=(x-x_0)^k f^{(k+1)}(x),
\end{equation}
and, using integration by parts,
\begin{equation}\label{eq:Ikrec}
\begin{split}
(k+1)\mathcal{I}_k(x)&=\int\limits_{x_0}^x \frac{\dr (t-x_0)^{k+1}}{\dr t} f^{(k+1)}(t)\, \dr t\\
&=(x-x_0)^{k+1}f^{(k+1)}(x)-\mathcal{I}_{k+1}(x)
\end{split}
\end{equation}

We now prove \eqref{eq:intid} by induction in $k$. It is obviously true for $k=0$. Suppose now it holds for some $k$. Then,
using \eqref{eq:Ikprime} and \eqref{eq:Ikrec}, we obtain
\[
\begin{split}
\left(\frac{f(x)}{x-x_0}\right)^{(k+1)}&=\frac{\dr}{\dr x} \frac{\mathcal{I}_k(x)}{(x-x_0)^{k+1}}\\
&=-(k+1)(x-x_0)^{-k-2}\mathcal{I}_k(x)+(x-x_0)^{-1}f^{(k+1)}(x)\\
&=-(x-x_0)^{-1}f^{(k+1)}(x)+\mathcal{I}_{k+1}(x)+(x-x_0)^{-1}f^{(k+1)}(x)\\
&=\mathcal{I}_{k+1}(x).
\end{split}
\]
\end{proof}

Proposition \ref{prop:analysisfact} implies 
\begin{corollary}\label{cor:analysisfact} Under conditions of Proposition \ref{prop:analysisfact},
\begin{equation}\label{eq:analysisfact}
\left|\left(\frac{f(x)}{x-x_0}\right)^{(k)}(x)\right|\leq \left\|f^{(k+1)}\right\|_{C^{0}[a,b]}\quad\text{for all }x\in[a,b].
\end{equation}
\end{corollary}

\begin{proof}[Proof of Corollary \ref{cor:analysisfact}] We use \eqref{eq:intid}: the integrand in $\mathcal{I}_k(x)$ is bounded point-wise in absolute value by 
\[
|x-x_0|^{k}\cdot \left\|f^{(k+1)}\right\|_{C^0[a,b]},
\]
 from which \eqref{eq:analysisfact} follows.
\end{proof}

We now inductively apply  \eqref{eq:analysisfact} to $P_1(\sigma)$, taking $[a,b]=[\sigma-1,\sigma+1]$ and $x_0=\sigma_m\in\mathcal{M}_\Sigma(\sigma)$, to show that
\[
\left|P_1(\sigma)\right|\le \left\|Q_\Sigma(\sigma)\right\|_{C^{\#\mathcal{M}_\Sigma(\sigma)}[\sigma-1,\sigma+1]}.
\]
The right-hand side here is in turn is bounded by $\|Q_\Sigma(\sigma)\|_{C^N(\mathbb R)}$, which we know is finite. Thus, $P_1(\sigma)$ is uniformly bounded.

To analyse $P_2(\sigma)$, we use \eqref{eq:P2}.  There are at most $N$ terms in the sets $\mathcal{M}_\Sigma(\sigma)$, and therefore in the products in the right hand-side of \eqref{eq:P2}. All the elements of $\mathcal{M}_\Sigma(\sigma)$ go to $\infty$ as $\sigma\to\infty$.
Moreover, the absolute value of the difference of every two corresponding terms of these products,
\[
\left|(\sigma_m-\sigma)-(\lambda_m-\sigma)\frac{\lambda_m+\sigma}{\sigma_m+\sigma}\right|=\left|\frac{\sigma_m^2-\lambda_m^2}{\sigma_m+\sigma}\right|\le \frac{\left|\sigma_m^2-\lambda_m^2\right|}{2\sigma_m-1}
\]
goes to zero as $\sigma\to\infty$, and there is a uniform upper bound for all terms. By continuity of the product map from $\mathbb R^N$ to $\mathbb R$, the difference of products goes to zero, as desired. This completes the proof of \eqref{eq:near}.

We now proceed with establishing \eqref{eq:far}. For simplicity we assume from now on that the multiplicities of zero in sequences $\Sigma$ and $\Lambda$ coincide, that is $n_0=m_0$. The other cases can be treated in a similar manner.

In order to prove \eqref{eq:far} we intend to prove first that the function
\begin{equation}\label{eq:Rdef}
R_{\Sigma,\Lambda}(\sigma):=\ln\left|Q_{\Lambda}(\sigma)\right|-\ln\left|C_0\tilde{Q}_{\Sigma,\Lambda}(\sigma)\right|
\end{equation}
satisfies
\begin{equation}\label{eq:Rlim}
\lim_{\sigma\to\infty}R_{\Sigma,\Lambda}(\sigma)=0.
\end{equation}
Then \eqref{eq:far} follows immediately since the function $x\mapsto \er^x$ is uniformly continuous on any compact set, and both $Q_{\Lambda}(\sigma)$ and $\tilde{Q}_{\Sigma,\Lambda}(\sigma)$ are uniformly bounded on the positive real line by \eqref{eq:near} and  \eqref{eq:Rlim}, and both  terms in \eqref{eq:far} have the same sign for sufficiently large $\sigma$.

To prove \eqref{eq:Rlim}, we write out \eqref{eq:Rdef} explicitly using \eqref{eq:defofq}, \eqref{eq:Qtildedef1}, and \eqref{eq:C0defn}, and simplifying, yielding
\begin{equation}\label{eq:Rmore}
\begin{split}
R_{\Sigma,\Lambda}(\sigma)&=\sum_{m\not\in\mathcal{M}_\Sigma(\sigma)}\left(\ln\left|1-\frac{\sigma^2}{\lambda_m^2}\right|-\ln\frac{\sigma_m^2}{\lambda_m^2}-\ln\left|1-\frac{\sigma^2}{\sigma_m^2}\right|\right)\\
&=\sum_{m\not\in\mathcal{M}_\Sigma(\sigma)}\ln \left|\frac{\lambda_m^2-\sigma^2}{\sigma_m^2-\sigma^2}\right|
=\sum_{m\not\in\mathcal{M}_\Sigma(\sigma)}\left(\ln\left|\frac{\lambda_m-\sigma}{\sigma_m-\sigma}\right|+\ln\left|\frac{\lambda_m+\sigma}{\sigma_m+\sigma}\right|\right)\\
&=\sum_{m\not\in\mathcal{M}_\Sigma(\sigma)}\left(\ln \left|1+\frac{\lambda_m-\sigma_m}{\sigma_m-\sigma}\right|+\ln \left|1+\frac{\lambda_m-\sigma_m}{\sigma_m+\sigma}\right|\right).
\end{split}
\end{equation}

Each term of the sum in the right-hand side of \eqref{eq:Rmore} goes to zero as $\sigma\to\infty$, so any finite sum goes to zero. Let 
\begin{equation}\label{eq:tildeMSigma}
\widetilde{\mathcal{M}}_{\Sigma,\Lambda}(\sigma):=\mathcal{M}_{\Sigma}(\sigma)\cup\{m:\sigma_m\leq 1\}\cup\left\{m:|\lambda_m-\sigma_m|\geq \frac{1}{2}\right\}.
\end{equation}
and
\begin{equation}\label{eq:starMSigma}
\mathcal{M}^*_{\Sigma,\Lambda}(\sigma):=\mathbb{N}\setminus \widetilde{\mathcal{M}}_{\Sigma,\Lambda}(\sigma)=\left\{m: |\sigma_m-\sigma|>1, \sigma_m>1, |\lambda_m-\sigma_m|< \frac{1}{2}\right\};
\end{equation}
to write down the right-hand side of \eqref{eq:starMSigma} explicitly we have used \eqref{eq:MSigma} and \eqref{eq:tildeMSigma}.

Since $\widetilde{\mathcal{M}}_{\Sigma,\Lambda}(\sigma)\setminus \mathcal{M}_\Sigma(\sigma)$ is finite, we can replace the summation in the right-hand side of \eqref{eq:Rmore} by the sum over $m\in\mathcal{M}^*_{\Sigma,\Lambda}(\sigma)$. For those terms, 
\[
 \left|1+\frac{\lambda_m-\sigma_m}{\sigma_m\mp\sigma}\right|< \frac{1}{2},
 \]
and we use the fact that on the interval $\left[-\frac{1}{2},\frac{1}{2}\right]$ we have the inequality
\[
|\ln(1+x)|\leq 2|x|.
\]
Thus it suffices to show that the following expression goes to zero as $\sigma$ goes to infinity:
\begin{equation}\label{eq:sum2}
\sum_{m\in\mathcal{M}^*_{\Sigma,\Lambda}(\sigma)}\left( \left|\frac{\lambda_m-\sigma_m}{\sigma_m-\sigma}\right|+ \left|\frac{\lambda_m-\sigma_m}{\sigma_m+\sigma}\right|\right).
\end{equation}
The second term in \eqref{eq:sum2} is smaller than the first, and we have, by \eqref{eq:quasibound} and by Proposition \ref{prop:summary}(a),
\[
|\lambda_m-\sigma_m|\leq\const m^{-\epsilon}\leq\const \sigma_m^{-\epsilon},
\]
so it is enough to show decay of the expression
\[
R^*_{\Sigma,\Lambda}(\sigma):=\sum_{m\in\mathcal{M}^*_{\Sigma,\Lambda}(\sigma)}\frac{\sigma_m^{-\epsilon}}{|\sigma_m-\sigma|}.
\]
We have
\[
R^*_{\Sigma,\Lambda}(\sigma)\le R^\#_{\Sigma}(\sigma):=\sum_{m\in\mathcal{M}^\#_{\Sigma}(\sigma)}\frac{\sigma_m^{-\epsilon}}{|\sigma_m-\sigma|},
\quad\text{with}\quad\mathcal{M}^\#_{\Sigma}(\sigma):=\{m: |\sigma_m-\sigma|>1, \sigma_m>1\},
\]
and we will show that $R^\#_{\Sigma}(\sigma)\to 0$ as $\sigma\to\infty$ by comparing to an integral. Consider the function 
\[
g_\sigma(x):=x^{-\epsilon}|x-\sigma|^{-1}.
\]
 For each fixed $\sigma$, this function has no local maxima. So for each $\sigma_m$ with $m\in\mathcal{M}^\#_{\Sigma}(\sigma)$, there exists an interval $A_m$, of length $\frac{1}{2}$, either directly to the left or to the right of $\sigma_m$, for which 
\[
g_\sigma(\sigma_m)\leq 2\int_{A_m}g_\sigma(x)\, \dr x.
\]
We sum these inequalities over $m\in\mathcal{M}^\#_{\Sigma}(\sigma)$, and use the fact that each $x\in\mathbb R$ lies in at most $N$ such intervals $A_m$, to obtain the estimate
\begin{equation}\label{eq:Rhashbound}
R^\#_{\Sigma}(\sigma)=\sum_{m\in\mathcal{M}^\#_{\Sigma}(\sigma)} g_\sigma(\sigma_m)\leq 2N\int_{x>\frac{1}{2},|x-\sigma|\geq \frac{1}{2}}g_\sigma(x)\, \dr x.
\end{equation}

In principle, the integral in the right-hand side of \eqref{eq:Rhashbound} can be written down explicitly in terms of the incomplete beta functions, and the asymptotics as $\sigma\to\infty$ analysed, but the resulting expressions are pretty cumbersome,  so we instead break this integral into three parts to estimate. 
First consider
\[
\operatorname{Int}_1(\sigma):=\int_{\frac{1}{2}}^{\frac{\sigma}{2}}g_\sigma(x)\, \dr x=\int_{\frac{1}{2}}^{\frac{\sigma}{2}}\frac{x^{-\epsilon}}{\sigma-x}\, \dr x.
\]
The denominator is bounded below by $\sigma/2$, so we obtain a bound
\[
\operatorname{Int}_1(\sigma)\le \frac{2}{\sigma}\int_{\frac{1}{2}}^{\frac{\sigma}{2}}x^{-\epsilon}\, \dr x=
\begin{cases}
\frac{2^\epsilon}{\epsilon-1}\left(\sigma^{-1}-\sigma^{-\epsilon}\right)&\quad\text{if }\epsilon\ne 1,\\
2\frac{\ln\,\sigma}{\sigma}&\quad\text{if }\epsilon=1,
\end{cases}
\]
which goes to zero as $\sigma$ goes to infinity.

The second part is
\[
\operatorname{Int}_2(\sigma):=\int_{\frac{\sigma}{2}}^{\sigma-\frac{1}{2}}g_\sigma(x)\, \dr x=\int_{\frac{\sigma}{2}}^{\sigma-\frac{1}{2}}\frac{x^{-\epsilon}}{\sigma-x}\, \dr x.
\]
Here the numerator is bounded above by $\left(\frac{\sigma}{2}\right)^{-\epsilon}$, and we get a bound
\[
\operatorname{Int}_2(\sigma)\le \left(\frac{\sigma}{2}\right)^{-\epsilon}\int_{\frac{\sigma}{2}}^{\sigma-\frac{1}{2}}\frac{1}{\sigma-x}\, \dr x=\left(\frac{\sigma}{2}\right)^{-\epsilon}\ln\,\sigma,
\]
which again goes to zero as $\sigma$ goes to infinity.

The third and final part is
\[
\operatorname{Int}_3(\sigma):=\int_{\sigma+\frac{1}{2}}^\infty g_\sigma(x)\, \dr x=\int_{\sigma+\frac{1}{2}}^\infty\frac{x^{-\epsilon}}{x-\sigma}\, \dr x=\int_{\frac{1}{2}}^\infty\frac{1}{x(x+\sigma)^\epsilon}\, \dr x.
\]
The last integral goes to zero by the dominated convergence theorem, with the dominator $x^{-1-\epsilon}$, since the integrand converges to $0$ as $\sigma\to\infty$ for any fixed $x$. Thus all three integrals converge to zero, as $\sigma$ tends to infinity, completing the proof of \eqref{eq:far}, and with it the proof of Theorem \ref{lem:goestozero}.
\end{proof}

\begin{proof}[Proof of Theorem \ref{thm:A}] The upshot of Theorem  \ref{lem:goestozero} is that
\begin{equation}\label{eq:upshot}
F_{\balpha, \bell}(\sigma)=C_1 Q_{\Lambda}(\sigma)+o(1)\qquad\text{as}\quad\sigma\to+\infty,
\end{equation}
with some constant $C_1$. By repeating now word by word the construction of  Section \ref{subs:recovery} with $C$ replaced by $C_1$ and using \eqref{eq:Aofosmall}, we arrive at Theorem \ref{thm:A}.
\end{proof}
\section{Proof of Theorem \ref{thm:B}}
\subsection{Recovering the lengths sorted by magnitude} 
Assume, as in the statement of Theorem \ref{thm:B}, that we are given a characteristic polynomial, in the form \eqref{eq:Fdef2}, of an unknown curvilinear polygon $\mathcal{P}(\balpha,\bell)$ satisfying conditions  \eqref{eq:ellcond} (that is, the lengths are incommensurable over $\{0,\pm 1\}$) and \eqref{eq:alphacond} (all angles are not special). Recall that by \eqref{eq:Tdef}
\[
\mathcal{T}:=\{|\bell\cdot\bzeta|:  \bzeta\in\pmset{n}_+\},
\]
(and is known), and by \eqref{eq:calpha}
\[
\mathbf{c}:=\left(\cos\frac{\pi^2}{2\alpha_1},\dots,\cos\frac{\pi^2}{2\alpha_n}\right),
\]
(and is unknown). 
Set additionally 
\[
s_0:=\sign\left(\sin\frac{\pi^2}{2\alpha_1}\cdot\cdots\cdot\sin\frac{\pi^2}{2\alpha_n}\right)
\]
(yet unknown).

Then,
\begin{itemize}
\item all elements of $\mathcal{T}$ are distinct positive real numbers;
\item cardinality $\#\mathcal{T}$ is equal to $2^{n-1}$, and we can therefore immediately recover the number of vertices of $\mathcal{P}$ as
\[
n=\log_2(\#\mathcal{T})+1;
\]
\item the coefficients $r_k$,  $k=1,\dots,2^{n-1}$, are all non-zero (since there are no special angles), and their moduli do not exceed one;
\item $r_0\in(-1,1)$ is given by
\[
r_0=s_0\prod\limits_{j=1}^n \sqrt{1-c_j^2}.
\]
\end{itemize}

Assume, as above, that we are given a trigonometric polynomial in the form \eqref{eq:Fdef2} corresponding to a non-special polygon with incommensurable lengths.
Then we have the following
\begin{theorem}\label{thm:inverse2}
Given a set of frequencies $\mathcal{T}=\mathcal{T}_n$, we can reconstruct a permutation of the vector of lengths
\[
\bell'=(\ell_1',\dots,\ell'_n),
\]
such that the lengths are sorted increasingly,
\[
\ell_1'<\ell_2'<\dots<\ell_n'.
\]
\end{theorem}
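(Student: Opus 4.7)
The plan is a two-stage reconstruction. First, I will transform $\mathcal T$ into the set of all subset sums of $\{\ell_1,\dots,\ell_n\}$; second, I will recover the sorted lengths from those subset sums by a greedy procedure. Throughout, the incommensurability hypothesis \eqref{eq:ellcond} guarantees that $\mathcal T$ has cardinality $2^{n-1}$ and that all $2^n$ subset sums of the lengths are distinct, so $n=\log_2(\#\mathcal T)+1$ is immediately recovered.

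For the first stage, note that $L=\max\mathcal T$, since $|\bell\cdot\bzeta|\le L$ with equality only at $\bzeta=(1,\dots,1)$, the unique maximizer in $\pmset{n}_+$. For any $\bzeta\in\pmset{n}_+$, write $A=\{j:\zeta_j=1\}$ and $s_A=\sum_{j\in A}\ell_j$, so $s_A+s_{A^c}=L$ and $\bell\cdot\bzeta=s_A-s_{A^c}$. A direct check gives
\[
\{L-|\bell\cdot\bzeta|,\ L+|\bell\cdot\bzeta|\}=\{2s_{A^c},\ 2s_A\}
\]
irrespective of the sign of $\bell\cdot\bzeta$. As $\bzeta$ varies over $\pmset{n}_+$, $A$ ranges over all subsets of $\{1,\dots,n\}$ containing $1$ while $A^c$ ranges over all subsets not containing $1$, so jointly they exhaust all $2^n$ subsets exactly once. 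Therefore
\[
\mathcal A:=\tfrac{1}{2}\bigl((L-\mathcal T)\cup(L+\mathcal T)\bigr)=\Bigl\{\textstyle\sum_{j\in B}\ell_j : B\subseteq\{1,\dots,n\}\Bigr\}
\]
is fully determined by $\mathcal T$.

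For the second stage I apply a standard greedy reconstruction. Set $\mathcal B_0:=\{0\}$ and inductively define
\[
\ell_k':=\min(\mathcal A\setminus\mathcal B_{k-1}),\qquad\mathcal B_k:=\mathcal B_{k-1}\cup(\ell_k'+\mathcal B_{k-1}),\qquad k=1,\dots,n.
\]
An easy induction shows that $\mathcal B_k$ equals the set of subset sums of $\{\ell_1',\dots,\ell_k'\}$, so every element of $\mathcal A\setminus\mathcal B_{k-1}$ corresponds to a subset of $\{1,\dots,n\}$ involving at least one length not yet recovered; since all $\ell_j>0$, the minimal such sum is attained precisely by the singleton consisting of the smallest such length, which must therefore be $\ell_k'$. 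After $n$ steps we obtain the increasingly sorted vector $\bell'=(\ell_1',\dots,\ell_n')$.

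The main obstacle lies in the first stage: because $\mathcal T$ only records the absolute values $|\bell\cdot\bzeta|$, one must combine both $L-t$ and $L+t$ to recover from each frequency $t\in\mathcal T$ the two complementary subset sums. Once the identification $\mathcal A=\tfrac12((L-\mathcal T)\cup(L+\mathcal T))$ is in hand, the greedy step is essentially mechanical and relies only on the incommensurability of the lengths and the positivity of all $\ell_j$.
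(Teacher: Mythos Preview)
Your proof is correct and is a tidy reorganisation of the paper's argument rather than a fundamentally different idea. The paper works directly in $\mathcal T$: after reading off $L=\max\mathcal T$, it iteratively deletes from $\mathcal T$ the frequencies attributable to already-recovered lengths and extracts $\ell_k'=\tfrac12\bigl(L-\max\mathcal T_{n,k}\bigr)$. You instead first pass from $\mathcal T$ to the full collection $\mathcal A$ of subset sums via the identity $\{L-t,\,L+t\}=\{2s_A,\,2s_{A^c}\}$, and then run the standard bottom-up greedy for recovering a set of positive reals from its (distinct) subset sums. The two loops are essentially dual: the map $t\mapsto\tfrac12(L-t)$ is order-reversing, so the paper's ``take the maximum remaining frequency'' becomes your ``take the minimum remaining subset sum''; the exclusions differ only by the harmless large complements $s_{S^c}$, which never affect the minimum. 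What your presentation buys is modularity: the one genuinely new observation---that the absolute values in $\mathcal T$ conceal which of $s_A,s_{A^c}$ is which, but that taking \emph{both} $\tfrac12(L\pm t)$ recovers them simultaneously---is isolated into a single clean identity, after which Stage~2 is a well-known combinatorial algorithm whose correctness rests only on positivity of the $\ell_j$ and the incommensurability hypothesis (giving distinctness of all $2^n$ subset sums). The paper's version keeps everything phrased in terms of frequencies, which is slightly more direct but makes the role of the subset-sum structure less visible.
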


\begin{proof}[Proof of Theorem \ref{thm:inverse2}] Without loss of generality, re-order the frequencies in an increasing order,
\[
t_1<t_2<\dots<t_{2^{n-1}}.
\]
We now proceed in steps, where on Step $k$, $k=1,\dots,n$, we determine the value of $\ell_k'$.

\emph{Step $0$}. We immediately have
\[
L:=\ell_1+\cdots+\ell_n=\ell'_1+\cdots+\ell'_n=\max \mathcal{T}_n= t_{2^{n-1}}.
\]

\emph{Step $1$}. Set
\[
\mathcal{T}_{n,1}:=\mathcal{T}_n \setminus \{L\}.
\]
Then
\[
\ell_1'=\frac{1}{2}(L-\max\mathcal{T}_{n,1})=\frac{1}{2}(L-t_{2^{n-1}-1}).
\]

\emph{Step $k$, $k=2,\dots,n-1$}. Suppose we have already found  $\ell_1'<\dots<\ell'_{k-1}$. Set
\[
\tilde{\mathcal{T}}_{n,k}:=\left\{\pm\left(L-2\sum_{j=1}^{k-1}f_j t_j\right)\middle| f_j\in\{0,1\}\right\},
\]
and
\[
\mathcal{T}_{n,k}:=\mathcal{T}_n \setminus \tilde{\mathcal{T}}_{n,k}
\]
(basically, to obtain $\mathcal{T}_{n,k}$ we exclude from $\mathcal{T}_{n}$ all linear combinations of lengths which may have minuses in front of already found $\ell_1',\dots,\ell'_{k-1}$ and do not have minuses anywhere else, and the negations of such linear combinations). Then,
\[
\ell_k'=\frac{1}{2}(L-\max\mathcal{T}_{n,k}).
\]

\emph{Step $n$}. Set
\[
\ell_n' = L-\sum\limits_{j=1}^{n-1} \ell'_j.
\]

Thus, we recover the lengths $\ell'_1<\dots<\ell'_n$. 
\end{proof}

We do not know yet the original order of the sides, that is, the permutation $\left(m_k\right)_{k=1}^n$ such that $\ell'_k=\ell_{m_k}$. We will  also use the inverse permutation $\left(k_m\right)_{m=1}^n$ such that $\ell'_{k_m}=\ell_m$.
\subsection{Recovering the correct order of the sides and the information on the angles}
\label{subsec:proofthmb}
 Once we found the vector $\bell'$, we still need to determine the correct order of sides and the angles, with appropriate modifications in the exceptional case. We consider separately three cases.

{\bf Case $n=1$}. Then there is only one angle and one side, the trigonometric polynomial has the form $\cos(t_1 \sigma)+r_0$, where $t_1=\ell_1$, and we know the coefficient $r_0=s_0\sqrt{1-c_1^2}$. We have $\bell=(t_1)$. There are two sub-cases:
\begin{description}
\item[$r_0=0$:] Then the only angle is exceptional,  therefore $\mathbf{c}=\pm(1)$, and the exceptional  boundary component is even.
\item[$r_0\ne 0$:] The angle is non-exceptional, and $\mathbf{c}=\pm\left(\sqrt{1-r_0^2}\right)$ (as $|s_0|=1$).
\end{description}

{\bf Case $n=2$}. There are two angles, and the trigonometric polynomial has the form $r_1\cos(t_1 \sigma)+\cos(t_2\sigma)-r_0$, where 
\begin{equation}\label{eq:case2}
r_1=c_1c_2,\qquad r_0^2=(1-c_1^2)(1-c_2^2).
\end{equation} 
Without loss of generality $\bell'=\bell$. There are three sub-cases:
\begin{description}
\item[$r_0=0$ and $|r_1|=1$:] Both angles are exceptional, and there are two exceptional boundary components of one side each.  They are both even if $r_1=1$ and both odd if $r_1=-1$.
\item[$r_0=0$ and $|r_1|<1$:] One angle is exceptional and another is non-exceptional, and there is one even exceptional boundary component. Without loss of generality we can assume that $\alpha_1$ is even exceptional  ($c_1=1$), then $c_2=r_1$, and therefore allowing for a change of orientation and a change of sign, $\mathbf{c}=\pm(1,r_1)$.
\item[$r_0\ne 0$:] Both angles are non-exceptional.  Solving the quadratic equations deduced from \eqref{eq:case2}, we obtain
\[
\{|c_1|,|c_2|\}=\left\{\rho, \frac{r_1}{\rho}\right\},\qquad\text{with }\rho=\frac{\sqrt{1+r_1^2-r_0^2+\sqrt{(1+r_1^2-r_0^2)^2-4r_1^2}}}{\sqrt{2}},
\]
and therefore allowing for a change of orientation and a change of sign, $\mathbf{c}=\pm\left(\rho, \frac{r_1}{\rho}\right)$.
\end{description}

{\bf Case $n>2$}. Once we know all the $\ell_j'$, we know the linear combination of $\pm \ell_j'$  which corresponds to a particular frequency $t_k$. In order to proceed further, we need to re-write the characteristic trigonometric polynomial once more using a slightly different notation.

First, consider a subset $\mathcal{J}\subseteq \{1,\dots,n\}$. We denote by $\bzeta(\mathcal{J})=(\zeta_1,\dots,\zeta_n)$ a vector in  $\pmset{n}$ such that 
\[
\zeta_j=\begin{cases} 1,&\quad\text{if }j\in\mathcal{J},\\-1,&\quad\text{if }j\not\in\mathcal{J}.\end{cases}
\]
For every such subset  $\mathcal{J}$ there exists a unique element $t:=t(\mathcal{J})\in\mathcal{T}_n$ such that $t=|\bzeta(\mathcal{J})\cdot \bell'|$. We note also that $t(\mathcal{J})=t(\{1,\dots,n\}\setminus\mathcal{J})$. The characteristic polynomial can be re-written as
\[
\sum_{\mathcal{J}\subseteq \{1,\dots,n\}} \frac{r(\mathcal{J})}{2} \cos(t(\mathcal{J}) \sigma)-r_0,
\]
where all the amplitudes $r(\mathcal{J})$ are known. 

It will be in particular useful to write down the amplitudes $r(\mathcal{J})$ in cases when a subset $\mathcal{J}$ contains either one or two elements. For $\mathcal{J}=\{k\}$, the vector $\bzeta(\mathcal{J})$ will have exactly \emph{two} sign changes, in positions $m_{k-1}$ and $m_k$, and we have
\[
r'_k:=r(\{k\})=c_{m_k-1} c_{m_k},
\]
see Figure \ref{fig:fig2}(a).

\begin{figure}[hbt]
\begin{center}
\includegraphics[width=2.5in]{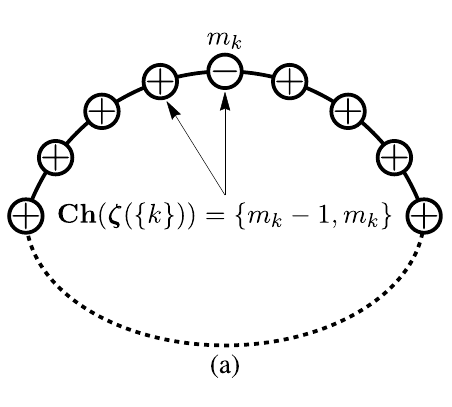}\qquad \includegraphics[width=2.5in]{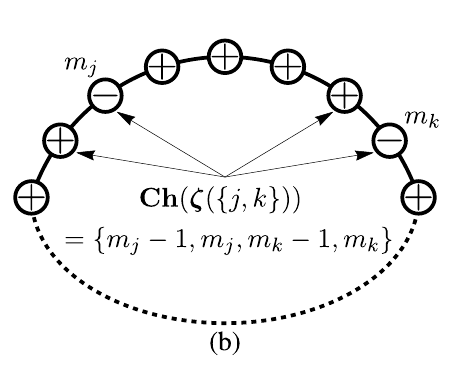}\ \includegraphics[width=2.5in]{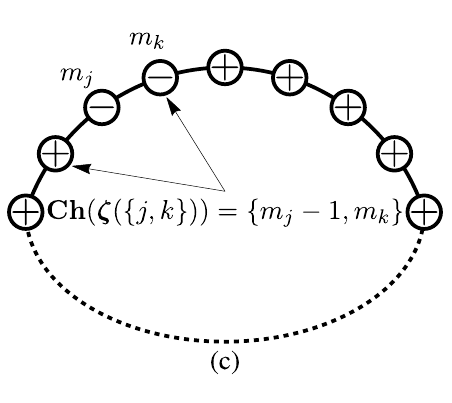}
\caption{Illustration of the sign changes in $\bzeta(\mathcal{J})$ in three cases: (a) $\mathcal{J}=\{k\}$; (b) $\mathcal{J}=\{j,k\}$, and the vertices $V_{m_j}$ and $V_{m_k}$ are not neighbours; (c) $\mathcal{J}=\{j,k\}$, and the vertices $V_{m_j}$ and $V_{m_k}$ are neighbours, in the case shown with $m_j=m_k-1$.\label{fig:fig2}}
\end{center}
\end{figure}

For $\mathcal{J}=\{j,k\}$, with $j\ne k$, the situation is more complicated and depends on whether the vertices $V_{m_j}$ and $V_{m_k}$ are \emph{neighbours}, that is, on whether $|m_j-m_k|=1$.  If they are \emph{not} neighbours, there are \emph{four} sign changes in $\bzeta(\mathcal{J})$, in positions $m_{j-1}$, $m_j$, $m_{k-1}$, and $m_k$, and we have
\[
r(\{j,k\}=r'_jr'_k,
\]
see Figure \ref{fig:fig2}(b).

If the vertices  $V_{m_j}$ and $V_{m_k}$ \emph{are} neighbours, then  the vector $\bzeta(\mathcal{J})$ will have again exactly \emph{two} sign changes, now at positions $\min(m_j,m_k)-1$ and $\max(m_j,m_k)$, and we have
\[
r(\{j,k\})=c_{\min(m_j,m_k)-1} c_{\max(m_j,m_k)}=\frac{r'_jr'_k}{c^2_{\min(m_j,m_k)}},
\]
see Figure \ref{fig:fig2}(c).

We emphasise that at this stage we do not yet know the correct enumerating sequence $m_k$.  On the other hand we know the matrix
\[
R'_{j,k}:=r(\{j,k\}),\qquad j,k=1,\dots,n;
\]
its diagonal entries are $R'_{k,k}=r'_k$.  Introduce additionally the matrix
\[
D'_{j,k}:=\frac{R'_{j,j}R'_{k,k}}{R'_{j,k}},\qquad j,k=1,\dots,n.
\]
This matrix is symmetric, and its off-diagonal entries 
(which are all positive) indicate which sides are adjacent to each other, in the following sense:
\begin{itemize}
\item If $D'_{j,k}<1$ for some $j\ne k$, then the sides with lengths $\ell'_j$ and $\ell'_k$ are adjacent to each other, and for the angle $\alpha_p$ between them (with $p=\max(m_j, m_k)$) the corresponding element $c_p$ of the vector $\mathbf{c}$ can be found, up to sign:  
\[
\left|c_p\right|=\left|\cos\frac{\pi^2}{2\alpha_p}\right|=\sqrt{D'_{j,k}}.    
\]
\item If $D'_{j,k}=1$ for some $j\ne k$, then the corresponding sides with lengths $\ell'_j$ and $\ell'_k$ are either not adjacent, or are adjacent but with an exceptional angle between them.
\end{itemize}
 
We can now use the properties of the matrix $D'$ to find, first, the number $K$ of exceptional angles. Note that in the non-exceptional case each row of  $D'$ contains exactly \emph{two} off-diagonal entries which are less than one. In the exceptional case, a row number $j$ may have \emph{one} such entry (which indicates that there is an exceptional angle at \emph{one} end of the side $\ell'_j$) or \emph{zero} such entries (indicating that  there are exceptional angles at \emph{both} ends of this side). Thus, we can recover the number of exceptional angles as
\begin{equation}\label{eq:Kdeduced}
K=n-\frac{\#\{(j,k): j\ne k\text{ and } D'_{j,k}<1\}}{2}.
\end{equation}

Assuming for the moment that $K=0$, we can now proceed with determining the side-lengths $\bell$ in the correct order, and the vector $\pm\mathbf{c}$. 
From now on, without loss of generality we can assume $m_1=1$, so that $\ell_1=\ell'_1$.
By inspection of the first row of matrix $D'$ we can find two indices, denoted $k_2$ and $k_n$, such that entries $D_{1,k_2}$ and $D_{1,k_n}$ are strictly less than $1$. Therefore, the sides $\ell'_{k_2}$ and $\ell'_{k_n}$ are neighbours of the side $\ell_1=\ell'_1$, and should be re-labelled as $\ell_2$ and $\ell_n$ (we have the freedom of choosing enumeration of these two sides at this stage, hence an ambiguity in choosing the orientation). Suppose, for definiteness, that  we set $m_{k_2}=2$ and $m_{k_n}=n$.
Then we have, for the angle between $\ell_1$ and $\ell_2$, $|c_1|=\sqrt{D'_{1,k_2}}$, and for the angle  between $\ell_1$ and $\ell_n$, $|c_n|=\sqrt{D'_{1,k_n}}$.
 
 We now continue the process by looking at the row number $k_2$ of $D'$. We have already determined one of the entries in this row which is less than one: it is $D'_{k_2,1}$ (by the symmetry of $D'$). Let the index of the other such entry be denoted by $k_3$. Then the side $\ell_3:=\ell'_{k_3}$ is adjacent to $\ell_2$, and we set $m_{k_3}=3$ and
 find, for the angle between $\ell_2$ and $\ell_3$, $|c_2|=\sqrt{D'_{k_2,k_3}}$.  
 
Continuing the process, we determine the order of all sides (modulo reversal of orientation), and the vector $(|c_1|,\dots,|c_n|)$.

In the presence of exceptional angles ($K>0$), we proceed in a similar manner with the following modifications. We start the process at a row of $D'$ in which there is 
exactly one off-diagonal entry which is less than one, if such a row exists (otherwise choose a row with no  off-diagonal entries less than one). Assume it is the first row of $D$ and set, for the first exceptional boundary component, $\ell^{(1)}_1=\ell'_1$. Suppose $D'_{1,k_2}<1$; then set $\ell^{(1)}_2=\ell'_{k_2}$ and $\left|c^{(1)}_1\right|=\sqrt{D'_{1,k_2}}$. We continue the process until we reach a row of $D'$ in which no further off-diagonal entry less than one can be found. We then re-start the process from another (as yet unencountered) row of $D'$ to find the second exceptional boundary component, and so on.

To finish the proof of Theorem  \ref{thm:B} it remains only to show, in the non-exceptional case, that if we fix the sign of the cosine $c_1$, say, the signs of other  cosines $c_2,\dots,c_n$ will be determined automatically. This in fact follows immediately: the angles $\alpha_{m}$ and $\alpha_{m-1}$ are adjacent to the side $\ell_m=\ell'_{k_m}$, and therefore
\begin{equation}\label{eq:csign}
\sign\left(c_{m-1}c_m\right)=\sign\left(D_{k_m,k_m}\right). 
\end{equation}
The exceptional case is dealt with similarly. \qed
\begin{remark}
\label{rem:quantum}
In view of  Remark \ref{quantum0}, a combination of  Theorems \ref{prop:Aprime} and  \ref{thm:B} may be perceived as an inverse spectral result for a certain special family of quantum graphs.  Namely, let $\mathcal{G}=\mathcal{G}_{\balpha,\bell}$ be a circular graph with $n$ vertices $V_1,\dots,V_n$ enumerated clock-wise, with edges of length $\ell_j$ joining $V_{j-1}$ and $V_j$. Let $s\in[0,L]$ be a global edge length variable of $\mathcal{G}$. We consider the following spectral problem on $\mathcal{G}$:
\begin{equation}\label{eq:G}
\begin{split}
-\frac{\mathrm{d}^2 f}{\mathrm{d}^2 s}&=\nu f,\\ 
\sin\frac{\pi^2}{4\alpha_j} f|_{V_j-0}&=\cos\frac{\pi^2}{4\alpha_j}f|_{V_j+0},\\
\cos\frac{\pi^2}{4\alpha_j} f'|_{V_j-0}&=\sin\frac{\pi^2}{4\alpha_j}f'|_{V_j+0}.
\end{split}
\end{equation} 
Then, according to \cite[Theorem 2.24]{LPPS}, the eigenvalues $\nu_m$ of \eqref{eq:G} are related to quasi-eigenvalues $\sigma_m$ as
\[
\sqrt{\nu_m}=\sigma_m
\] 
with account of multiplicities.

Our methods allow one to  recover from the spectrum of a quantum graph not only its  edge lengths $\bell$  
(which is expected, see, for example,  \cite{KoSm, GuSmi, KN1, KostSch06, KostPotSch, Bol, KN2})  
but also the information on vertex matching conditions encoded by the vector $\pm\mathbf{c}_{\balpha}$: indeed, if we know the quantities $\pm c_j=\pm \cos\frac{\pi^2}{2\alpha_j}$, then we know the quantities 
\[
\left\{\left|\tan\frac{\pi^2}{4\alpha_j}\right|, \left|\cot\frac{\pi^2}{4\alpha_j}\right|\right\}=\left\{\sqrt{\frac{1\mp c_j}{1\pm c_j}}\right\}
\] 
which determine the vertex conditions in \eqref{eq:G} up to a change of sign and a change of orientation. 

\end{remark}
\subsection{Examples}
\label{subsec:examples}
\begin{example}\label{ex:recovernonex}
Consider the trigonometric polynomial
\begin{equation}\label{eq:ex1F}
\begin{split}
F(\sigma)&=-\frac{1}{60}\cos((2+\er-\pi)\sigma)
-\frac{1}{3}\cos((2\sqrt{2}+\er-\pi)\sigma)
+\frac{1}{8}\cos((2-\er+\pi)\sigma)\\
&-\frac{2}{15}\cos((-2\sqrt{2}+\er+\pi)\sigma)
+\frac{1}{10}\cos((2\sqrt{2}-\er+\pi)\sigma)
-\frac{1}{6}\cos((-2+\er+\pi)\sigma)\\
&+\frac{1}{20}\cos((2+\er+\pi)\sigma)
+\cos((2\sqrt{2}+\er+\pi)\sigma)
-\sqrt{\frac{3}{8}}.
\end{split}
\end{equation}
The cosine terms are ordered in increasing order of frequencies $t_k$, $k=1,\dots,8$.

We start by finding the side-lengths, in increasing order, following the procedure in the proof of Theorem  \ref{thm:inverse2}.

\emph{Steps $0$ and $1$}. By inspection, we immediately have
\[
L=\sum_{k=1}^4 \ell'_k=2\sqrt{2}+\er+\pi,
\]
and
\[
\ell_1'=\frac{1}{2}(L-t_7)=\frac{1}{2}(L-(2+\er+\pi))=\sqrt{2}-1.
\]

\emph{Step $2$}. We have
\[
\mathcal{T}_{4,2}=\{ t_1,\dots, t_6\},
\]
and therefore
\[
\ell_2'=\frac{1}{2}(L-t_6)=\frac{1}{2}(L-(-2+\er+\pi))=1+\sqrt{2}.
\]

\emph{Step $3$}. We have
\[
\mathcal{T}_{4,3}=\{t_1,t_2,t_3,t_5\},
\]
and therefore
\[
\ell_3'=\frac{1}{2}(L-t_5)=\frac{1}{2}(L-(2\sqrt{2}-\er+\pi))=\er.
\]

\emph{Step $4$}. Finally,
\[
\ell_4'=L-\ell_1'-\ell_2'-\ell_3'=\pi.
\]

We now proceed to determine the order of sides $\ell_{m_k}=\ell'_k$ in the polygon, and the corresponding quantities $|c_k|$.  We re-write \eqref{eq:ex1F} as
\[
\begin{split}
F(\sigma)&=-\frac{1}{60}\cos\left((-++-)\cdot\bell'\,\sigma\right)
-\frac{1}{3}\cos\left((+++-)\cdot\bell'\,\sigma\right)
+\frac{1}{8}\cos\left((-+-+)\cdot\bell'\,\sigma\right)\\
&-\frac{2}{15}\cos\left((--++)\cdot\bell'\,\sigma\right)
+\frac{1}{10}\cos\left((++-+)\cdot\bell'\,\sigma\right)
-\frac{1}{6}\cos\left((+-++)\cdot\bell'\,\sigma\right)\\
&+\frac{1}{20}\cos\left((-+++)\cdot\bell'\,\sigma\right)
+\cos\left((++++)\cdot\bell'\,\sigma\right)
-\sqrt{\frac{3}{8}}.
\end{split}
\]
By inspection, the matrix $R'$ is
\[
R'=\begin{pmatrix}
 \frac{1}{20} & -\frac{2}{15} & \frac{1}{8} & -\frac{1}{60} \\
 -\frac{2}{15} & -\frac{1}{6} & -\frac{1}{60} & \frac{1}{8} \\
 \frac{1}{8} & -\frac{1}{60} & \frac{1}{10} & -\frac{2}{15} \\
 -\frac{1}{60} & \frac{1}{8} & -\frac{2}{15} & -\frac{1}{3}
 \end{pmatrix},
 \]
 and therefore the matrix $D'$ is
 \[
 D'=
 \begin{pmatrix}
 \frac{1}{20} & \frac{1}{16} & \frac{1}{25} & 1 \\
 \frac{1}{16} & -\frac{1}{6} & 1 & \frac{4}{9} \\
 \frac{1}{25} & 1 & \frac{1}{10} & \frac{1}{4} \\
 1 & \frac{4}{9} & \frac{1}{4} & -\frac{1}{3}
\end{pmatrix}
\]

Set $\ell_1=\ell_1'=\sqrt{2}-1$. Looking for the entries different from one in the first row of $D'$, we set $k_2=2$, $k_4=3$ (and so $m_2=2$, $m_3=4$), and therefore obtain 
\[
\ell_2=\ell'_2=1+\sqrt{2},\qquad |c_1|=\sqrt{D'_{1,2}}=\frac{1}{4},
\]
and
\[
\ell_4=\ell'_3=\er,\qquad |c_4|=\sqrt{D'_{1,3}}=\frac{1}{5}.
\]

Switching to the second row of $D'$, we set $k_3=4$ (and so $m_4=3$), and further obtain 
\[
\ell_3=\ell'_4=\pi,\qquad |c_2|=\sqrt{D'_{2,4}}=\frac{2}{3},
\]
and finally
\[
|c_3|=\sqrt{D'_{4,3}}=\frac{1}{2}.
\]
Summarising, we have so far
\[
\begin{split}
(m_k)=(1,2,4,3),\qquad (k_m)&=(1,2,4,3),\\
(|c_1|, |c_2|, |c_3|, |c_4|)&=\left(\frac{1}{4}, \frac{2}{3}, \frac{1}{2},  \frac{1}{5}\right),
\end{split}
\]
and
\begin{equation}\label{eq:lfound}
\bell=(\ell'_1,\ell'_2,\ell'_4,\ell'_3)=(\sqrt{2}-1, 1+\sqrt{2},\pi,\er).
\end{equation}
Finally, to find the signs of $c_j$, we use \eqref{eq:csign}, giving
\[
\sign(c_1 c_2)=\sign(D'_{2,2})=-1,\quad \sign(c_2 c_3)=\sign(D'_{4,4})=-1,\quad \sign(c_3 c_4)=\sign(D'_{3,3})=1,
\] 
and so
\begin{equation}\label{eq:cfound}
\mathbf{c}=\pm\left(\frac{1}{4}, -\frac{2}{3}, \frac{1}{2},  \frac{1}{5}\right).
\end{equation}
We remark that the trigonometric polynomial \eqref{eq:ex1F} was in fact constructed as $F_{\tilde{\balpha}, \tilde{\bell}}(\sigma)$ with 
\begin{equation}\label{eq:lgiven}
\tilde{\bell}=\left(\er,\pi,1+\sqrt{2},\sqrt{2}-1\right),
\end{equation}
and $\tilde{\balpha}$ such that 
\begin{equation}\label{eq:cgiven}
\tilde{\mathbf{c}}=\mathbf{c}_{\tilde{\balpha}}=\left(\frac{1}{2}, -\frac{2}{3}, \frac{1}{4}, \frac{1}{5}\right).
\end{equation}
Comparing \eqref{eq:lfound}--\eqref{eq:cfound} with  \eqref{eq:lgiven}--\eqref{eq:cgiven}, we can confirm that we have indeed recovered the geometric information about the polygon
within the restrictions of Theorem \ref{thm:B}(a), see also Remark \ref{ftn:c}.  
 \end{example}

\begin{example}\label{ex:recoverex}
Consider now the trigonometric polynomial
\begin{equation}\label{eq:ex2F}
\begin{split}
F(\sigma):&=F_{\tilde{\balpha}_\mathrm{ex},\tilde{\bell}}(\sigma)=-\frac{1}{2}\cos((2+\er-\pi)\sigma)
-\frac{1}{2}\cos((2\sqrt{2}+\er-\pi)\sigma)\\
&+\frac{1}{2}\cos((2-\er+\pi)\sigma)
-\cos((-2\sqrt{2}+\er+\pi)\sigma)
-\frac{1}{2}\cos((2\sqrt{2}-\er+\pi)\sigma)\\
&+\cos((-2+\er+\pi)\sigma)
-\cos((2+\er+\pi)\sigma)
+\cos((2\sqrt{2}+\er+\pi)\sigma).
\end{split}
\end{equation}
It is generated using the same vector $\tilde{\bell}$  (given by \eqref{eq:lgiven}) as in Example \ref{ex:recovernonex} (and therefore has the same frequencies $t_k$, $k=1,\dots,8$) but with $\tilde{\balpha}$ replaced by a vector $\tilde{\balpha}_\mathrm{ex}$ such that   
\begin{equation}\label{eq:cgivenex}
\tilde{\mathbf{c}}_\mathrm{ex}=\mathbf{c}_{\tilde{\balpha}_\mathrm{ex}}=\left(\frac{1}{2}, 1, 1, -1\right).
\end{equation} 
We will now use the procedure outlined in the exceptional case of Theorem \ref{thm:B}(b) to recover the geometric information from \eqref{eq:ex2F}.

Since the frequencies are the same as in Example \ref{ex:recoverex}, the recovery of the vector $\bell'$ goes exactly as before. The matrices $R'$ and $D'$ become, by inspection,
\[
R'=\begin{pmatrix}
-1 & -1 & \frac{1}{2} & -\frac{1}{2} \\
 -1 & 1 & -\frac{1}{2} & \frac{1}{2} \\
 \frac{1}{2} & -\frac{1}{2} & -\frac{1}{2} & -1 \\
 -\frac{1}{2} & \frac{1}{2} & -1 & \frac{1}{2}
 \end{pmatrix}
 \qquad\text{and}\qquad
 D'=\begin{pmatrix}
-1 & 1 & 1 & 1 \\
 1 & 1 & 1 & 1 \\
 1 & 1 & -\frac{1}{2} & \frac{1}{4} \\
 1 & 1 & \frac{1}{4} & \frac{1}{2}
  \end{pmatrix}.
  \]
  
Formula \eqref{eq:Kdeduced} then gives the number of exceptional angles $K=3$. Starting the reconstruction of exceptional boundary components with the third row of $D'$,
we set $\ell^{(1)}_1=\ell'_3=\er$. Since the only non-unity off-diagonal entry in this line is $D'_{3,4}$, we set $\ell^{(1)}_2=\ell'_4=\pi$ and $\left|c^{(1)}_1\right|=\sqrt{D'_{3,4}}=\frac{1}{2}$. There are no further sides connected to $\ell^{(1)}_2$, therefore the first exceptional boundary component has two sides; as $\sign(D'_{3,3}D'_{4,4})=-1$, this exceptional boundary component is odd.

The other two exceptional boundary components have only one arc each; we can choose $\bell^{(2)}=(\ell'_1)=\left(\sqrt{2}-1\right)$ (this exceptional boundary component being odd since $\sign(D'_{1,1})=-1$), and  $\bell^{(3)}=(\ell'_2)=\left(\sqrt{2}+1\right)$ (this exceptional boundary component being even since $\sign(D'_{2,2})=+1$).
\end{example}

The next four examples illustrate that Theorem \ref{thm:B} no longer holds if we drop either the condition that there are no special angles, or the condition of sides incommensurability. Note that if the incommensurability condition fails, then the cardinality of the frequencies set  \eqref{eq:Tdef} is strictly less than $2^{n-1}$, while in the presence of the special angles  some amplitudes vanish, and therefore certain frequencies do not show up in the trigonometric polynomial.  In both situations we are therefore unable to follow the recovery procedure of Theorem \ref{thm:inverse2}.

\begin{example}\label{ex:special1}  Consider a family of straight parallelograms 
\[
P_a:=\mathcal{P}\left(\left(\frac{\pi}{5},\frac{4\pi}{5},\frac{\pi}{5},\frac{4\pi}{5}\right), (a,1-a,a,1-a)\right)
\]
depending on a parameter $0<a<1$, with sides $a$ and $1-a$ (and therefore a fixed perimeter $L=2$), and angles $\frac{\pi}{5}$ (which is special) and $\frac{4\pi}{5}$. 
Then the characteristic polynomial \eqref{eq:Fdef} for $P_a$ is
\[
F(\sigma)=\cos(2\sigma)-\frac{1}{\sqrt{2}}.
\]
As it is independent of $a$, we cannot recover the side-lengths from it. Note that in this example, \emph{both} conditions \eqref{eq:ellcond}  and \eqref{eq:alphacond} are not satisfied. 
\end{example}

\begin{example}\label{ex:special2} We additionally show that if we allow special angles, there exist pairs of straight triangles (with pairwise different vectors $\bell$ and $\mathbf{c}$) which produce identical trigonometric polynomials \eqref{eq:Fdef}. For $i_1, i_2\in\mathbb{N}$, consider a triangle $T_{i_1,i_2}$ with perimeter one,  two special angles $\alpha_1=\frac{\pi}{2i_1+1}$, $\alpha_2=\frac{\pi}{2i_2+1}$, and the third angle $\alpha_3=\pi-\alpha_1-\alpha_2$. Then \eqref{eq:Fdef} for $T_{i_1,i_2}$ becomes, after some simplifications,
\[
F(\sigma)=\cos(\sigma)+(-1)^{i_1+i_2} \sin 2\pi\frac{i_1+i_2+1}{4i_1i_2-1}.
\]
The polynomials for two different triangles $T_{i_1,i_2}$ and $T_{\tilde{i}_1,\tilde{i}_2}$ would coincide if they have the same constant term, in particular if
\begin{equation}\label{eq:ms}
(-1)^{i_1+i_2}\frac{i_1+i_2+1}{4i_1i_2-1}=(-1)^{\tilde{i}_1+\tilde{i}_2}\frac{\tilde{i}_1+\tilde{i}_2+1}{4\tilde{i}_1\tilde{i}_2-1}
\end{equation}
has a solution $(i_1,i_2,\tilde{i}_1,\tilde{i}_2)\in\mathbb{N}^4$.

One solution of \eqref{eq:ms} is given by $(i_1,i_2,\tilde{i}_1,\tilde{i}_2)=(3,31,4,10)$; thus two triangles with perimeter one and angles $\balpha=\left(\frac{\pi}{7}, \frac{\pi}{63}, \frac{53\pi}{63}\right)$ and
$\tilde{\balpha}=\left(\frac{\pi}{9}, \frac{\pi}{21}, \frac{53\pi}{63}\right)$, respectively, are indistinguishable from their respective Steklov quasi-eigenvalues.
\end{example}

\begin{example}\label{ex:comm1}
Consider two curvilinear triangles $\mathcal{Q}=\mathcal{P}(\balpha, \bell)$ and $\tilde{\mathcal{Q}}=\mathcal{P}(\tilde{\balpha}, \tilde{\bell})$, with $\bell=(1,1,3)$ and $\tilde{\bell}=(1,2,2)$, so that  \eqref{eq:ellcond} is not satisfied. Then we claim that the angles  $\balpha=(\alpha_1,\alpha_2,\alpha_3)$ and $\tilde{\balpha}=(\tilde{\alpha}_1,\tilde{\alpha}_2,\tilde{\alpha}_3)$ can be chosen in such a way that
\begin{equation}\label{eq:TT'}
F_{\balpha, \bell}(\sigma)=F_{\tilde{\balpha}, \tilde{\bell}}(\sigma),
\end{equation}
for all $\sigma\in\mathbb{R}$, and therefore all the quasi-eigenvalues of $\mathcal{Q}$ and $\tilde{\mathcal{Q}}$ coincide.

Set $c_j:=\cos\frac{\pi^2}{2\alpha_j}$,  $s_j:=\sin\frac{\pi^2}{2\tilde{\alpha}_j}$, $\tilde{c}_j:=\cos\frac{\pi^2}{2\tilde{\alpha}_j}$, and $\tilde{s}_j:=\sin\frac{\pi^2}{2\tilde{\alpha}_j}$, $j=1,2,3$. We now write down \eqref{eq:TT'} explicitly using the definitions, yielding
\[
\begin{split}
&\cos(5\sigma)+c_2c_3\cos(\sigma)+c_1c_2\cos(3\sigma)+c_1c_3\cos(3\sigma)-s_1s_2s_3\\=
&\cos(5\sigma)+\tilde{c}_2\tilde{c}_3\cos(\sigma)+\tilde{c}_1\tilde{c}_2\cos(\sigma)+\tilde{c}_1\tilde{c}_3\cos(3\sigma)-\tilde{s}_1\tilde{s}_2\tilde{s}_3,
\end{split}
\]
which becomes an identity if we can find an instance of
\begin{equation}\label{eq:cssystem}
\begin{cases}
c_2c_3&=\tilde{c}_2\tilde{c}_3+\tilde{c}_1\tilde{c}_2,\\
c_1c_2+c_1c_3&=\tilde{c}_1\tilde{c}_3,\\
s_1s_2s_3&=\tilde{s}_1\tilde{s}_2\tilde{s}_3.
\end{cases}
\end{equation}
It is easily checked that the system \eqref{eq:cssystem} is satisfied, for example, if we choose
\[
c_1=c_2=\tilde{c}_2=\frac{1}{2}, c_3=\frac{-39+\sqrt{241}}{40}, \tilde{c}_1=\frac{7-\sqrt{241}}{12}, \tilde{c}_3=\frac{-19+\sqrt{241}}{40},
\]
and choose the angles in such a way that all the sines are positive. Note that all the angles here are neither special nor exceptional. 
\end{example}

\begin{example}\label{ex:comm2} Consider a family of curvilinear two-gons $\mathcal{P}((\alpha_1, \alpha_2), (\ell, \ell))$ with sides of equal length. The characteristic equation becomes 
\[
\cos 2\sigma\ell =-\cos\left(\frac{\pi^2}{2\alpha_1}+ \frac{\pi^2}{2\alpha_2}\right),
\]
and we can therefore only recover  from its roots the side length $\ell$ and the quantity in the right-hand side. On the other hand, if we additionally restrict ourselves to  two-gons with $\alpha_1=\alpha_2$, then any two quasi-isospectral two-gons in this class are loosely equivalent. 
\end{example}

\begin{remark} The numerical computation of the first few eigenvalues in each of Examples \ref{ex:special1}--\ref{ex:comm2} indicates that the corresponding families or pairs of quasi-isospectral domains are not isospectral.
\end{remark}

\subsection*{Acknowledgements}\addcontentsline{toc}{section}{Acknowledgements} The authors are grateful to Pavel Kurasov for providing the paper \cite{KurSuhr}, as well as  to Chris Daw, Andrew Granville, and Peter Sarnak  for useful discussions. The research of LP was partially supported by EPSRC grants EP/J016829/1 and EP/P024793/1. The research of IP was partially supported by NSERC.

\end{document}